\newtheorem{hypo}{Hypothesis}
\newtheorem{prop}[hypo]{Proposition}
\newtheorem{thm}[hypo]{Theorem}
\newtheorem{lem}[hypo]{Lemma}
\def\C{\mathcal{C}}
\def\D{\mathcal{D}}
\def\I{\mathcal{I}}
\def\O{\mathcal{O}}
\def\E{\mathcal{E}}
\def\PP{\mathbb{P}}
\def\RR{\mathbb{R}}
\def\ZZ{\mathbb{Z}}
\def\CC{\mathbb{C}}
\def\EE{\mathbb{E}}
\def\NN{\mathbb{N}}
\newcommand {\pare}[1] {\left( {#1} \right)}
\newcommand {\cro}[1] {\left[ {#1} \right]}
\def \ind {\hbox{ 1\hskip -3pt I}}
\newcommand {\refeq}[1] {(\ref{#1})}
\newcommand {\va}[1] {\left| {#1} \right|}
\newcommand {\acc}[1] {\left\{ {#1} \right\}}
\newcommand {\floor}[1] {\left\lfloor {#1} \right\rfloor}
\newcommand {\ceil}[1] {\left\lceil {#1} \right\rceil}
\title[Limit theorems for 1-d and 2-d RWRS]
       {Limit theorems
   for one and two-dimensional random walks in random scenery}
\author{Fabienne Castell} 
\address{LATP, UMR CNRS 6632. Centre de Math\'ematiques et Informatique.
Universit\'e Aix-Marseille I. 39, rue Joliot Curie. 13 453 Marseille Cedex
13. France.}
\email{Fabienne.Castell@cmi.univ-mrs.fr}
\author{Nadine Guillotin-Plantard} 
\address{Institut Camille Jordan, CNRS UMR 5208, Universit\'e de Lyon, Universit\'e Lyon 1, 43, Boulevard du 11 novembre 1918, 69622 Villeurbanne, France.}
\email{nadine.guillotin@univ-lyon1.fr}
\author{Fran\c{c}oise P\`ene}
\address{Universit\'e Europ\'eenne de Bretagne, Universit\'e de Brest,
D\'epartement de Math\'ematiques, 29238 Brest cedex, France}
\email{francoise.pene@univ-brest.fr}
\subjclass[2000]{60F05; 60G52}
\keywords{Random walk in random scenery; local limit theorem; local time; stable process\\
This research was supported by the french ANR project MEMEMO2 and RANDYMECA}
\begin{document}

\begin{abstract} 
Random walks in random scenery are processes defined by  
$Z_n:=\sum_{k=1}^n\xi_{X_1+...+X_k}$, where $(X_k,k\ge 1)$ and $(\xi_y,y\in{\mathbb Z}^d)$
are two independent sequences of i.i.d. random variables with values
in ${\mathbb Z}^d$ and $\mathbb R$ respectively.
We suppose that the distributions of $X_1$ and $\xi_0$ belong to the normal basin of
attraction of stable distribution of index $\alpha\in(0,2]$ and $\beta\in(0,2]$. 
When $d=1$ and $\alpha\ne 1$, a functional limit theorem has been established in \cite{KestenSpitzer}
and a local limit theorem in \cite{BFFN}.
In this paper, we establish the convergence of
the finite-dimensional distributions and a local limit theorem
when $\alpha=d$ (i.e.
$\alpha = d=1$ or $\alpha=d=2$) and $\beta \in (0,2]$. 
Let us mention that functional limit theorems 
have been established in
\cite{bolthausen} and recently in \cite{DU} in 
the particular case where $\beta=2$ (respectively 
for $\alpha=d=2$ and $\alpha=d=1$). 
\end{abstract}

\maketitle

\section{Introduction}
  Random walks in random scenery (RWRS) 
are simple models of processes in disordered
media with long-range correlations. They have been used in a wide
variety of models in physics to study anomalous dispersion in layered
random flows \cite{matheron_demarsily},  diffusion with random sources, 
or spin depolarization in random fields (we refer the reader
to Le Doussal's review paper \cite{ledoussal} for a discussion of these
models). 

On the mathematical side, motivated by the construction of 
new self-similar processes with stationary increments, 
Kesten and Spitzer \cite{KestenSpitzer} and Borodin  \cite{Borodin, Borodin1} 
introduced RWRS in dimension one and proved functional limit theorems. 
This study has been completed in many works, in particular in
\cite{bolthausen} and \cite{DU}.
These processes are defined as follows. Let $\xi:=(\xi_y,y\in \ZZ^d)$ and $X:=(X_k,k\ge 1)$ 
be two independent sequences of independent
identically distributed random variables taking values in $\RR$ and $\ZZ^d$ 
respectively. 
The sequence $\xi$ is called the {\it random scenery}. 
The sequence $X$ is the sequence of increments of the {\it random walk}  
$(S_n, n \geq 0)$
defined by $S_0:=0$ and  $S_n:=\sum_{i=1}^{n}X_i$, for $n\ge 1$. 
The {\it random walk in random scenery} $Z$ is 
then defined by
$$Z_0:=0\ \mbox{and}\ \forall n\ge 1,\ Z_n:=\sum_{k=0}^{n-1}\xi_{S_k}.$$ 
Denoting by  
$N_n(y)$ the local time of the random walk $S$~:
$$N_n(y):=\#\{k=0,...,n-1\ :\ S_k=y\} \, ,
$$
it is straightforward to see that 
$Z_n$ can be rewritten as $Z_n=\sum_y\xi_yN_n(y)$.

As in \cite{KestenSpitzer}, 
the distribution of $\xi_0$ is assumed to belong to the normal 
domain of attraction of a strictly stable distribution 
$\mathcal{S}_{\beta}$ of 
index $\beta\in (0,2]$, with characteristic function $\phi$ given by
$$\phi(u)=e^{-|u|^\beta(A_1+iA_2 \text{sgn}(u))}\quad u\in\mathbb{R},$$
where $0<A_1<\infty$ and $|A_1^{-1}A_2|\le |\tan (\pi\beta/2)|$. 
We will denote by $\varphi_\xi$ the characteristic function of the $\xi_x$'s. 
When $\beta > 1$, this implies that $\EE[\xi_0] = 0$. When $\beta = 1$, 
we will further assume the symmetry condition
\begin{equation} 
\label{symetrie}
\sup_{ t > 0} \va{\EE\cro{\xi_0 \ind_{\{\va{\xi_0} \le t\}}}} < +\infty \, .
\end{equation}
Under these conditions (for $\beta\in(0;2]$), 
there exists $C_\xi>0$ such that we have
\begin{equation}
\label{queue}
\forall t>0,\ \ \PP \pare{ \va{\xi_0} \ge t} \le C_\xi t^{-\beta}.
\end{equation}

\noindent Concerning the random walk, the distribution of $X_1$ is
assumed to belong to the normal basin of attraction of a stable
distribution ${\mathcal S}'_{\alpha}$ with index $\alpha\in (0,2]$.

\noindent Then the following weak convergences hold in the space  of 
c\`adl\`ag real-valued functions 
defined on $[0,\infty)$ and on $\mathbb R$ respectively,  endowed with the 
Skorohod $J_1$-topology (see \cite[chapter 3]{billingsley})~:
$$\left(n^{-1/\alpha} S_{\lfloor nt\rfloor}\right)_{t\geq 0}   
\mathop{\Longrightarrow}_{n\rightarrow\infty}
^{\mathcal{L}} \left(U(t)\right)_{t\geq 0}$$
$$\mbox{\rm and} \  \  \   \left(n^{-\frac{1}{\beta}} 
\sum_{k=0}^{\lfloor nx\rfloor}\xi_{k e_1}\right)_{x\in\mathbb R}
   \mathop{\Longrightarrow}_{n\rightarrow\infty}^{\mathcal{L}} 
\left(Y(x)\right)_{x\in\mathbb R},
\mbox{ with } e_1=(1,0,\cdots,0) \in \ZZ^d \, ,$$
where $U$ and $Y$ are two independent L\'evy processes such 
that $U(0)=0$, $Y(0)=0$, 
$U(1)$ has distribution $\mathcal{S}'_{\alpha}$, $Y(1)$ and 
$Y(-1)$ have distribution  $\mathcal{S}_\beta$.

\noindent
{\bf Functional limit theorem.}\\
  Our first result is concerned with a   
functional limit theorem for $(Z_{[nt]})_{t \ge 0}$. Intuitively speaking, 
\begin{itemize} 
\item when $\alpha < d$, the random
walk $S_n$ is transient, its range is of order $n$, 
and $Z_n$ has the same behaviour as a sum of
about $n$ independent random variables 
with the same distribution as the variables 
$\xi_x$. Therefore, $n^{-1/\beta} (Z_{[nt]})_{t \ge 0}$ weakly converges 
in  the space  $D([0,\infty))$ of 
c\`adl\`ag functions endowed with the 
Skorohod $J_1$-topology, to a multiple of the process $(Y_t)$, as proved in
\cite{Borodin1}; 
\item  when $\alpha > d$ (i.e $d=1$ and $1< \alpha \le 2$), the random
walk $S_n$ is recurrent, its range is of order $n^{1/\alpha}$, its local
times are of order $n^{1-1/\alpha}$, so that $Z_n$ is of order 
$n^{1-\frac{1}{\alpha}+\frac{1}{\alpha \beta}}$. In this situation, 
\cite{Borodin} and \cite{KestenSpitzer} proved a functional limit
 theorem for 
$n^{-(1-\frac{1}{\alpha}+\frac{1}{\alpha \beta})} (Z_{[nt]})_{t \ge 0}$ in the space 
$\CC([0,\infty))$ of continuous  functions endowed with the 
uniform topology , 
the limiting process being a self-similar process, but not a stable one.
\item when $\alpha = d$ (i.e. $\alpha=d=1$, or $\alpha = d = 2$),  
$S_n$ is recurrent,  its range is of order $n/\log(n)$, its 
local times are of order $\log(n)$ so that  $Z_n$ is of order 
$n^{\frac{1}{\beta}} \log(n)^{\frac{\beta-1}{\beta}}$. In this situation, a 
functional limit theorem in the space of continuous functions was proved in \cite{bolthausen} for 
$d=\alpha=\beta=2$, and in \cite{DU} for $d=\alpha=1$ and $\beta=2$.
\end{itemize}
Our first result gives a limit theorem for $\alpha=d$
(and so $ d\in \acc{1,2}$)
and for any value of $\beta \in (0;2)$ in the finite distributional sense.

\begin{thm}\label{thmFLT}
Let us assume that $\beta\in(0;2]$ and that
\begin{itemize}
\item[(a)] either $d=2$ and $X_1$ is centered, square integrable with invertible 
variance matrix $\Sigma$ and then we define $A:=2\sqrt{\det\Sigma}$;
\item[(b)] or $d=1$ and $\left(\frac{S_n}n\right)_n$ converges
in distribution to a random variable with characteristic function given by 
$t\mapsto \exp(-a |t|)$ with $a>0$ and then we define $A:=a$.
\end{itemize}
Then, the finite-dimensional distributions of the sequence of random variables
$$\left(\left(\frac{Z_{[nt]}}{n^{1/\beta}  \log(n)^{(\beta-1)/\beta}}  
\right)_{t\geq 0}\right)_{n\ge 2} $$ 
converges to the finite-dimensional distributions of the process 
$$\left(\tilde Y_t:=\left(\frac{\Gamma(\beta +1)}{(\pi A)^{\beta-1}}\right)^{1/\beta}  Y(t)
    \right)_{t\geq 0}.$$
Moreover, if $\beta<2$, the sequence   
$$\left(\left(\frac{Z_{[nt]}}{n^{1/\beta}  \log(n)^{(\beta-1)/\beta}}  
\right)_{t\geq 0}\right)_{n\ge 2} $$ is not
tight in  $\D([0,\infty))$ endowed with the $J_1$-topology.
\end{thm}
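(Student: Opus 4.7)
The plan is to condition on the random walk $S$ and exploit the stable-domain assumption on the scenery. Setting $a_n:=n^{1/\beta}\log(n)^{(\beta-1)/\beta}$ and recalling $Z_n=\sum_y\xi_yN_n(y)$, the conditional characteristic function of $uZ_n/a_n$ given $S$ equals $\prod_y\varphi_\xi(uN_n(y)/a_n)$. The stable-domain hypothesis (with \eqref{symetrie} when $\beta=1$) yields $\log\varphi_\xi(v)=-|v|^\beta(A_1+iA_2\,\text{sgn}(v))+o(|v|^\beta)$ near $0$, so this conditional characteristic function is asymptotic to $\exp\bigl(-(A_1+iA_2\,\text{sgn}(u))|u|^\beta V_n\bigr)$ with
\[
V_n\;:=\;\frac{1}{n\log(n)^{\beta-1}}\sum_{y\in\ZZ^d}N_n(y)^\beta.
\]

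The first step is to prove that $V_n\to \Gamma(\beta+1)/(\pi A)^{\beta-1}$ in probability. Under hypothesis (a) or (b), the local CLT gives $\PP(S_n=0)\sim 1/(\pi A\,n)$, hence $\EE[N_n(0)]\sim \log(n)/(\pi A)$ and Kallianpur--Robbins yields $N_n(0)/\log(n)\Rightarrow\mathrm{Exp}(\pi A)$, whose $\beta$-moment is $\Gamma(\beta+1)/(\pi A)^\beta$. A direct computation of $\EE[\sum_y N_n(y)^\beta]$ (reducing for $\beta=2$ to the classical self-intersection asymptotic $\sim 2n\log(n)/(\pi A)$) together with a second-moment bound on $V_n$ upgrades this to convergence in probability with the announced constant. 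Dominated convergence of the conditional characteristic function then gives $Z_n/a_n\to\tilde Y_1$ in distribution.

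For the finite-dimensional distributions at $0=t_0<t_1<\cdots<t_k$, set $\Delta_jN_n(y):=N_{[nt_j]}(y)-N_{[nt_{j-1}]}(y)$ and consider the joint conditional CF
\[
\prod_y\varphi_\xi\!\Bigl(\tfrac{1}{a_n}\textstyle\sum_{j=1}^k u_j\,\Delta_jN_n(y)\Bigr).
\]
By the Markov property and translation invariance of the scenery, for each $j$ the quantity $\sum_y\Delta_jN_n(y)^\beta/(n\log(n)^{\beta-1})$ tends in probability to $(t_j-t_{j-1})\Gamma(\beta+1)/(\pi A)^{\beta-1}$, obtained by applying Step~1 to the shifted walk of length $[n(t_j-t_{j-1})]$. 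The remaining point is to show that sites visited in two distinct windows contribute negligibly, namely
\[
\sum_y\Bigl|\sum_j u_j\Delta_jN_n(y)\Bigr|^\beta\;-\;\sum_j|u_j|^\beta\sum_y\Delta_jN_n(y)^\beta\;=\;o\bigl(n\log(n)^{\beta-1}\bigr),
\]
which reduces to a bound on the mutual intersection local time of two independent walks on $\ZZ^d$ with $d=\alpha$; in both critical regimes this mutual intersection is of order $\log(n)^2$, much smaller than the target $n\log(n)^{\beta-1}$. Identical control of the imaginary/skew part gives the factorized limit $\prod_j\exp\bigl(-(A_1+iA_2\,\text{sgn}(u_j))|u_j|^\beta(t_j-t_{j-1})\Gamma(\beta+1)/(\pi A)^{\beta-1}\bigr)$, which is exactly the joint CF of the increments of $\tilde Y$. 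This overlap estimate is the main technical obstacle.

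For the non-tightness when $\beta<2$: if the sequence were $J_1$-tight in $\D([0,\infty))$, the fidi convergence above would force weak convergence to $\tilde Y$, whose paths are discontinuous a.s.\ (as $Y$ is a $\beta$-stable L\'evy process with $\beta<2$). However, $t\mapsto Z_{[nt]}/a_n$ is piecewise constant with jumps $|\xi_{S_{k-1}}|/a_n$, and by \eqref{queue} and the range estimate $\EE[\#\{S_0,\ldots,S_{[nT]-1}\}]=O(n/\log n)$ (a consequence of (a) or (b)),
\[
\PP\!\Bigl(\max_{1\le k\le nT}\tfrac{|\xi_{S_{k-1}}|}{a_n}>\epsilon\Bigr)\;\le\; \EE[\#\mathrm{Range}]\cdot C_\xi(\epsilon a_n)^{-\beta}\;=\;O\bigl(\log(n)^{-\beta}\bigr)\;\longrightarrow\; 0
\]
for every $\epsilon>0$. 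Since any $J_1$-limit of processes with vanishing maximal jump must have continuous paths, this contradicts the almost sure discontinuity of $\tilde Y$ and disproves tightness.
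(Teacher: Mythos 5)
Your overall strategy matches the paper's: condition on the walk, use the stable-domain expansion of $\varphi_\xi$ to replace the conditional characteristic function by $\exp\bigl(-(A_1+iA_2\,\mathrm{sgn}\,u)|u|^\beta V_n\bigr)$, prove an ergodic-type theorem for $V_n$ and its multi-window generalization, and apply dominated convergence. The non-tightness argument (vanishing maximal jump via the range estimate and the tail bound \eqref{queue}, contradicting a.s.\ discontinuity of $\tilde Y$) is essentially identical to the paper's.

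However, there are two genuine gaps in the way you handle the key technical ingredient. First, the ``direct computation of $\EE[\sum_y N_n(y)^\beta]$'' is not available for non-integer $\beta$: a fractional moment of the local-time field cannot be read off combinatorially from the Green's function the way integer moments can. The paper deals with this by proving the statement for all integer exponents (invoking \v{C}ern\'y's theorem) and then upgrading to real $\gamma$ by a method-of-moments argument applied to a randomly sampled local time (the variable $W_n$ in the proof of Lemma \ref{tech1}). Your sketch asserts the fractional-moment asymptotics without supplying that mechanism.

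Second, and more seriously, your cross-term bound is both unjustified and quantitatively wrong. You claim that $\sum_y\bigl|\sum_j u_j\Delta_jN_n(y)\bigr|^\beta-\sum_j|u_j|^\beta\sum_y\Delta_jN_n(y)^\beta=o(n\log(n)^{\beta-1})$ ``reduces to a bound on the mutual intersection local time of two independent walks \dots of order $\log(n)^2$.'' In the critical dimension $d=\alpha$, the mutual intersection local time $\sum_y N_n^{(1)}(y)N_n^{(2)}(y)$ of two independent walks of length $n$ is of order $n$ (one computes $\sum_{k,l\le n}\PP(S_{k+l}=0)\asymp n$ from the local CLT), not $\log(n)^2$. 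Moreover, even with the correct order, the reduction of the $\beta$-th power cross-term to a product of local times would involve quantities like $\sum_y f(y)^{\beta-1}g(y)$, not $\sum_y f(y)g(y)$, and no inequality making this rigorous is offered. You flag this as ``the main technical obstacle,'' and indeed it is: the paper avoids it entirely with the telescoping identity $\Sigma_n(1,\dots,1)=\sum_x N_{[nt_m]}(x)^k-\sum_i\sum_x b_{i,n}(x)^k$, which is $o(n\log(n)^{k-1})$ directly from \v{C}ern\'y's theorem applied to the whole interval and to each sub-interval, combined with the bound $|\Sigma_n(\theta)|\le\max_i|\theta_i|^k\,\Sigma_n(1,\dots,1)$. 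This handles all cross-terms at once for integer $k$ and then passes to real $\gamma$ via the moment method, so no separate intersection estimate is needed.
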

\noindent
\vspace{.5cm}
\noindent
{\bf Local limit theorem.}\\
Our next results concern a local limit theorem for $(Z_n)_n$. 
The $d=1$ case was treated in 
\cite{BFFN} for $\alpha \in (0;2]\backslash \acc{1}$ and all values 
of $\beta \in (0;2]$. Here, we complete this study by proving 
a local limit theorem for $\alpha=d=1$ (and 
$\beta\in(0;2]$). By a direct adaptation of the proof of this result,
we also establish a local limit theorem for $\alpha=d=2$
(we just adapt the definition of "peaks", see section 3.5). 
Let us notice that the same adaptation can 
be done from \cite{BFFN} (case $\alpha<1$) to get local limit theorems 
for $d\geq 2$, $\alpha<d$ and $\beta\in(0;2]$.

We give two results corresponding respectively to
the case when $\xi_0$ is lattice and to the 
case when it is strongly non-lattice.
We denote by $\varphi_\xi$ the characteristic function of
$\xi_0$.
\begin{thm}\label{thmTLL}
Assume that $\xi_0$ takes its values in $\mathbb Z$ .
Let $d_0\ge 1$ be the integer such that $\{u\, :\, |\varphi_\xi(u)|=1\}=\frac{2\pi}{d_0}\mathbb Z$.
Let $b_n := n^{1/\beta}(\log(n))^{(\beta-1)/\beta}$. Under the previous assumptions on the random walk and on the scenery, 
for $\alpha = d \in \acc{1,2}$, for every $\beta \in (0,2]$, and 
for every $x\in \RR$,

$\bullet$ if ${\mathbb P}\left(n\xi_0-\floor{b_n x}\notin d_0{\mathbb Z}\right)=1$, 
then ${\mathbb P} \pare{Z_n= \floor{b_n x}}=0$;

$\bullet$ if ${\mathbb P}\pare{n\xi_0-\floor{b_n x}\in d_0{\mathbb Z}}=1$, then 
$$\PP\left(Z_n= \floor{  b_n  x}\right ) = 
    d_0\frac{C(x)}{  n^{1/\beta}(\log(n))^{(\beta-1)/\beta} }+ o( n^{-1/\beta}(\log(n))^{-(\beta-1)/\beta} )$$ 
uniformly in $x\in{\mathbb R}$, where $C(\cdot)$ is the density function of $\tilde Y_1$.
\end{thm}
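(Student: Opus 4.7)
\emph{Strategy and reduction.} The natural approach is Fourier inversion on the lattice $d_0\ZZ$. Under the lattice hypothesis of the theorem, $\lfloor b_n x\rfloor$ lies in the support of $Z_n$ and
$$\PP(Z_n = \lfloor b_n x\rfloor) = \frac{d_0}{2\pi} \int_{-\pi/d_0}^{\pi/d_0} e^{-iu \lfloor b_n x\rfloor}\, \EE[e^{iu Z_n}]\, du.$$
Setting $v = u b_n$, $\Phi_n(v) := \EE[e^{iv Z_n/b_n}]$ and $x_n := \lfloor b_n x\rfloor/b_n$, and recalling that $C$ is the density of $\tilde Y_1$, the claim reduces to
$$\int_{|v| \leq \pi b_n/d_0} e^{-iv x_n} \Phi_n(v)\, dv \xrightarrow[n\to\infty]{} \int_\RR e^{-ivx} \varphi_{\tilde Y_1}(v)\, dv = 2\pi\, C(x),$$
uniformly in $x\in\RR$. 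Since $|x_n - x| \leq b_n^{-1}$, uniformity in $x$ will follow from $L^1(\RR)$-convergence of $\Phi_n \ind_{\acc{|v|\leq \pi b_n/d_0}}$ to $\varphi_{\tilde Y_1}$. Pointwise convergence $\Phi_n(v) \to \varphi_{\tilde Y_1}(v)$ is already granted by Theorem~\ref{thmFLT} (applied at $t=1$), so the work lies in exhibiting an integrable dominant.

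\emph{Central zone.} Conditioning on the walk gives $|\Phi_n(v)| \leq \EE\cro{\prod_y |\varphi_\xi(v N_n(y)/b_n)|}$. For $|v| \leq M$ and $n$ large, the arguments $v N_n(y)/b_n$ are small for every visited site $y$, so the stable expansion $|\varphi_\xi(w)| \leq \exp(-c|w|^\beta)$ (a consequence of \eqref{queue}) yields
$$|\Phi_n(v)| \leq \EE\cro{\exp\pare{-c\, \va{v}^\beta\, V_n^{(\beta)}/b_n^\beta}}, \qquad V_n^{(\beta)} := \sum_y N_n(y)^\beta.$$
In dimension $d=\alpha$, the renormalized functional $V_n^{(\beta)}/b_n^\beta$ is tight and bounded away from $0$ in probability (it is precisely the local-time functional driving the limit $\tilde Y_1$ in Theorem~\ref{thmFLT}), so the right-hand side is dominated by an $L^1(\RR)$-function of $v$. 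Dominated convergence then takes care of the zone $|v|\leq M$.

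\emph{Tail zone and main obstacle.} Split $M \leq |v| \leq \pi b_n/d_0$ into an intermediate zone $|v|\leq \eta b_n$ and a far zone $\eta b_n \leq |v|\leq \pi b_n/d_0$. The intermediate zone is controlled by the same bound, and its contribution is at most $\int_M^\infty \exp(-c'|v|^\beta)\, dv$, arbitrarily small for $M$ large. In the far zone $u := v/b_n$ lies in a compact subset of $(0,\pi/d_0]$, separated from the unique peak $\acc{0}$ of $|\varphi_\xi|$ on $[-\pi/d_0,\pi/d_0]$. The delicate point, and the main obstacle, is that one cannot bound $|\varphi_\xi(u N_n(y))|$ by $|\varphi_\xi(u)|^{N_n(y)}$: because of the $(2\pi/d_0)$-periodicity of the peaks, $uN_n(y)$ may approach a peak even when $u$ does not. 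One must therefore rely on equidistribution of the multiset $\acc{uN_n(y) \bmod 2\pi/d_0 : N_n(y)\geq 1}$, combined with quantitative control, in dimension $d=\alpha$, on the number of sites visited exactly $k$ times, to produce a uniform constant $\rho < 1$ with $|\Phi_n(v)|\leq \EE[\rho^{c R_n}]$ where $R_n$ is the range of $S$. Since in dimension $d=\alpha$ one has $R_n \sim \pi A\, n/\log(n) \to \infty$ in probability, $\EE[\rho^{c R_n}]$ decays faster than any negative power of $b_n$, so the far zone contributes $O(b_n\, \EE[\rho^{c R_n}])=o(1)$. Combining all three zones yields the desired $L^1$-convergence and hence the theorem. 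The $\alpha = d = 2$ case follows via the same scheme, with the two-dimensional variant of the "peaks" alluded to by the authors in Section 3.5.
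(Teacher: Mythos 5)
Your overall strategy coincides with the paper's: Fourier inversion on the lattice, then split the dual variable into a central zone (where the stable expansion of $\varphi_\xi$ applies), an intermediate zone, and a far zone (where the periodic peaks of $|\varphi_\xi|$ must be controlled). However, the execution has two genuine gaps, and you skip the framework that makes the estimates work.

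\emph{Central zone.} You claim that $\EE\!\left[\exp(-c|v|^\beta V_n/b_n^\beta)\right]$ is dominated by an $L^1(\RR)$ function of $v$ because $V_n/b_n^\beta$ is ``tight and bounded away from $0$ in probability.'' That is not enough. Integrating your bound in $v$ gives (up to a constant) $\EE\!\left[(V_n/b_n^\beta)^{-1/\beta}\right]$, so an $L^1$ dominant requires a uniform-in-$n$ bound on the $1/\beta$-th negative moment of $V_n/b_n^\beta = V_n/(n\log(n)^{\beta-1})$; mere tightness away from zero gives no control over the left tail of $V_n$. The paper supplies exactly this in Lemma~\ref{lem:borne}, via a H\"older argument distinguishing $\beta>1$ (reduce to $\EE[R_n]$) from $\beta<1$ (reduce to exponential moments of $\sum_x N_n(x)^2/(n\log n)$). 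Without this moment bound the dominated-convergence step is unjustified. In fact the paper's Lemma~\ref{le13} avoids dominated convergence on the Fourier integral altogether: it changes variables, identifies the main contribution as $\EE[g_x(W_n)]$ with $W_n=b_nV_n^{-1/\beta}$ and $g_x(z)=zf(xz)$ Lipschitz, and concludes from $L^1$ convergence of $W_n$, which is arguably cleaner and automatically uniform in $x$.

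\emph{Far zone.} You correctly identify the obstacle --- $uN_n(y)$ can fall near a nonzero peak of $|\varphi_\xi|$ even when $u$ does not --- but you then write that ``one must rely on equidistribution\dots combined with quantitative control\dots to produce a uniform $\rho<1$ with $|\Phi_n(v)|\le\EE[\rho^{cR_n}]$.'' This is a restatement of the goal, not a proof, and it is precisely the hard content of the theorem. The paper achieves the decay through a concrete combinatorial construction: the event $\D_n$ of having order $n$ ``loops'', the extraction of $\gtrsim n^{1-\gamma}$ well-separated loop-bases $Y_i$, the conditional independence of $N_n(Y_i+B)-N_n^0(Y_i+B)$ as $\mathcal B(a_n,1/2)$ variables (Lemma~\ref{independance}), and the anti-concentration estimate (Lemmas~\ref{sec:liminf}, \ref{sec:lem0}) showing each $tN_n(Y_i+B)$ avoids the peaks with probability $\ge 1/3$. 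Equidistribution of $\{uN_n(y)\bmod 2\pi/d_0\}$ is not actually what is used, nor is it obvious how to get quantitative equidistribution uniformly over $u$ in a compact set. Finally, your proposal never introduces the event $\Omega_n$ controlling $R_n$ and $N_n^*$, nor the estimate $\PP(\Omega_n^c)=o(b_n^{-1})$; all three zonal estimates in the paper are proved on $\Omega_n$, and both the lower bound on $V_n$ you implicitly use in the central zone and the $n^{1-\gamma}$ count of loop-bases rely on it.
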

\begin{thm}\label{thmTLL2}
Assume now that $\xi_0$ is strongly non-lattice which means that
$$\limsup_{|u|\rightarrow +\infty}|\varphi_\xi(u)|<1.$$
We still assume that $\alpha=d\in\{1,2\}$ and $\beta\in(0;2]$.
Then, for every $x,a,b\in\mathbb R$ such that $a<b$, we have
$$\lim_{n\rightarrow +\infty} b_n{\mathbb P}\pare{Z_n\in[b_nx+a;b_nx+b]}
   =C(x)(b-a),
 $$
with $b_n := n^{1/\beta}(\log(n))^{(\beta-1)/\beta}$
and where $C(\cdot)$ is the density function of $\tilde Y_1$.
\end{thm}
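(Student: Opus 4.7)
I would follow the standard Fourier-inversion approach, translating a strongly non-lattice local limit theorem into an estimate on the characteristic function $\varphi_{Z_n}$. Fix $x,a,b$ with $a<b$, and sandwich $\ind_{[a,b]}$ in the Stone sense between non-negative smooth functions $h_\epsilon^{\pm}$ whose Fourier transforms $\hat h_\epsilon^{\pm}$ are compactly supported in some $[-T,T]$, with $\int h_\epsilon^{\pm}=(b-a)\pm O(\epsilon)$. Using continuity of the stable density $C$ at $x$, it suffices to prove that for any such $h$,
\begin{equation*}
b_n\,\EE[h(Z_n-b_n x)] \xrightarrow[n\to\infty]{} \Bigl(\int h\Bigr)\, C(x).
\end{equation*}
Parseval's identity, together with the change of variable $u=v/b_n$, rewrites this quantity as
\begin{equation*}
b_n\,\EE[h(Z_n-b_nx)] = \frac{1}{2\pi}\int_{-Tb_n}^{Tb_n}\hat h(v/b_n)\,e^{ivx}\,\varphi_{Z_n}(-v/b_n)\,dv,
\end{equation*}
and the expected limit is $\hat h(0)C(x)=(\int h)\,C(x)$ by Fourier inversion applied to the continuous, integrable stable density $C$ of $\tilde Y_1$.

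\textbf{Low frequencies.} For fixed $A>0$, on $|v|\le A$ Theorem~\ref{thmFLT} (applied at $t=1$) gives $Z_n/b_n\Rightarrow\tilde Y_1$, so $\varphi_{Z_n}(-v/b_n)\to\varphi_{\tilde Y_1}(-v)$ uniformly on $[-A,A]$, while $\hat h(v/b_n)\to\hat h(0)$. Dominated convergence then yields
\begin{equation*}
\frac{1}{2\pi}\int_{|v|\le A}\hat h(v/b_n)\,e^{ivx}\,\varphi_{Z_n}(-v/b_n)\,dv \longrightarrow \frac{\hat h(0)}{2\pi}\int_{|v|\le A} e^{ivx}\,\varphi_{\tilde Y_1}(-v)\,dv,
\end{equation*}
and the right-hand side tends to $\hat h(0)C(x)$ as $A\to\infty$.

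\textbf{Intermediate and high frequencies.} The remainder $A\le|v|\le Tb_n$ must vanish as $n\to\infty$ and then $A\to\infty$. Conditioning on the walk $S$,
\begin{equation*}
\bigl|\varphi_{Z_n}(-u)\bigr| \le \EE\!\left[\prod_{y\in\ZZ^d}\bigl|\varphi_\xi(uN_n(y))\bigr|\right].
\end{equation*}
On the intermediate range $A\le|v|\le\eta b_n$ (small $\eta>0$), use $|\varphi_\xi(s)|\le\exp(-c_1|s|^\beta)$ valid near $0$ (from the stable domain of attraction), combined with a high-probability estimate of the form $\sum_y N_n(y)^\beta\ge c_2 n(\log n)^{\beta-1}=c_2 b_n^\beta$, to bound the integrand by $\exp(-c_3|v|^\beta)$; this is integrable on $\{|v|\ge A\}$ with total mass tending to $0$. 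On the high range $\eta b_n\le|v|\le Tb_n$, use the strong non-lattice hypothesis: $\rho:=\sup_{|s|\ge\eta}|\varphi_\xi(s)|<1$. Restricting the product to peaks (sites with $N_n(y)=1$), conditionally on $S$,
\begin{equation*}
\bigl|\varphi_{Z_n}(-v/b_n)\bigr| \le \rho^{P_n},
\end{equation*}
where $P_n$ is the number of peaks. Since $\alpha=d\in\{1,2\}$ forces $P_n\ge c_4 n/\log n$ with overwhelming probability, the bound is of order $\exp(-c_5 n/\log n)$, which easily absorbs the length $2Tb_n$ of the integration range.

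\textbf{Main obstacle.} The delicate content lies in two walk-dependent estimates: (i) the concentration $\sum_y N_n(y)^\beta\asymp n(\log n)^{\beta-1}$, which is subtle for small $\beta$ because one must rule out that the mass of the local-time profile is carried by sites with atypically small local time; (ii) the lower bound on the number of peaks $P_n$, of order $n/\log n$, for the one-dimensional Cauchy-type walk and the planar walk considered here. Both estimates are presumably supplied by the local-time machinery developed in Section~3 (in particular the adaptation of peaks indicated in Section~3.5); once they are in hand, the remaining steps are a direct Fourier-inversion bookkeeping.
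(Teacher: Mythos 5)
Your Fourier-inversion framework (smoothing with a kernel whose Fourier transform has compact support and bounding the characteristic-function integral in three frequency ranges) is the same as the paper's, which uses the Polya kernel $h_\theta$ with $\hat h_\theta(t)=(1-|t+\theta|)_+$. Your low-frequency step, which invokes Theorem \ref{thmFLT} to get $\varphi_{Z_n/b_n}\to\varphi_{\tilde Y_1}$ uniformly on compacts and then sends $A\to\infty$, is a legitimate shortcut compared to the paper's route through Proposition \ref{lem:equivalent}. However, both of your tail estimates have genuine problems, and they are precisely where the content of the theorem lives.

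First, the intermediate range $A\le|v|\le\eta b_n$, i.e.\ $A/b_n\le|u|\le\eta$, is mis-calibrated. To apply $|\varphi_\xi(s)|\le e^{-c_1|s|^\beta}$ to every factor you need $|uN_n(y)|\le\varepsilon_0$ for every visited $y$, i.e.\ $|u|\le\varepsilon_0/N_n^*$; since $N_n^*\to\infty$ a.s., this fails once $|u|$ reaches a fixed $\eta$. (Indeed $|\varphi_\xi(s)|\le e^{-c_1|s|^\beta}$ cannot hold for all $s$: it would force $\varphi_\xi\to0$, which is incompatible with the strong non-lattice hypothesis $\limsup|\varphi_\xi|=\rho>0$.) The correct cutoff is at $|u|\approx\varepsilon_0 n^{-\gamma}$, where $\gamma$ is the exponent in $\Omega_n$ of Lemma \ref{lem:omega_n} (so that $N_n^*\le n^\gamma$); this is what Proposition \ref{sec:step1} uses. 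Second, your high-range argument via peaks $P_n\ge c_4 n/\log n$ is both unnecessary and dubious. Unnecessary, because once $|u|\ge\varepsilon_0 n^{-\gamma}$, the pair of bounds $|\varphi_\xi(uN_n(y))|\le e^{-\sigma|uN_n(y)|^\beta}$ (when $|uN_n(y)|\le\varepsilon_0$) and $|\varphi_\xi(uN_n(y))|\le\rho$ (otherwise) together give $|\varphi_\xi(uN_n(y))|\le e^{-\sigma\varepsilon_0^\beta n^{-\gamma\beta}}$ at \emph{every} visited site $y$, hence $\prod_y|\varphi_\xi(uN_n(y))|\le\exp(-\sigma\varepsilon_0^\beta n^{-\gamma\beta}R_n)\le\exp(-\sigma\varepsilon_0^\beta n^{1-\gamma(1+\beta)})$ on $\Omega_n$ using $R_n\ge n/N_n^*\ge n^{1-\gamma}$; the range of integration is then absorbed by $\hat h_\theta\in L^1$. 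Dubious, because for two-dimensional walks the expected number of sites visited exactly once is of order $n/(\log n)^2$, not $n/\log n$, and the paper's "peaks" (Section 3.5) are engineered from loops and yield only $n^{1-\gamma}$ of them — they are used exclusively in the lattice case, where the difficulty is that $\varphi_\xi$ returns to modulus one on $2\pi d_0^{-1}\ZZ$. In the strongly non-lattice case no such construction is needed.
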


\section{Proof of the limit theorem}
\noindent Before proving the theorem, we prove some technical lemmas. 
For any real number $\gamma >0$, any integer $m\ge 1$, any
$\theta_1,\ldots,\theta_m\in {\mathbb R}$, 
any $t_0=0<t_1<\ldots<t_m$, we consider the sequences of random variables 
$(L_n(\gamma))_{n\ge 2}$ and $(L'_n(\gamma))_{n\ge 2}$ defined by
$$L_n(\gamma):=\frac{1} {n (\log n)^{\gamma-1}}\sum_{x\in {\mathbb Z}^d} 
   \left\vert \sum_{i=1}^m \theta_i   ( N_{[nt_i]}(x) - N_{[nt_{i-1}]}(x) ) 
  \right\vert^{\gamma} $$
and
$$L'_n(\gamma):=\frac{1} {n (\log n)^{\gamma-1}}\sum_{x\in {\mathbb Z}^d} 
   \left\vert \sum_{i=1}^m \theta_i   ( N_{[nt_i]}(x) - N_{[nt_{i-1}]}(x) ) 
  \right\vert^{\gamma} \text{sgn}\left( \sum_{i=1}^m \theta_i   
    ( N_{[nt_i]}(x) - N_{[nt_{i-1}]}(x) ) \right).$$
\begin{lem}\label{tech1}
For any real number $\gamma > 0$, any integer $m\ge 1$, any
$\theta_1,\ldots,\theta_m\in {\mathbb R}$, 
any $t_0=0<t_1<\ldots<t_m$, the following convergences hold ${\mathbb P}$-almost surely
\begin{equation}\label{Ln}
\lim_{n\rightarrow +\infty}L_{n} (\gamma)
    =\frac{\Gamma(\gamma+1)}{(\pi A)^{\gamma-1}} \sum_{i=1}^m |\theta_i|^{\gamma} (t_i -t_{i-1})
\end{equation}
and
\begin{equation}\label{L'n}
\lim_{n\rightarrow +\infty}L'_{n} (\gamma)
    =\frac{\Gamma(\gamma+1)}{(\pi A)^{\gamma-1}} \sum_{i=1}^m|\theta_i|^{\gamma}
   \text{sgn}(\theta_i) (t_i -t_{i-1}).
\end{equation}
\end{lem}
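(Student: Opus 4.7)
The plan is to reduce Lemma~\ref{tech1} to a single-piece asymptotic via an asymptotic-disjointness-of-ranges argument. The key input is the single-piece statement ($m=1$, $\theta_1=1$):
$$\frac{1}{n(\log n)^{\gamma-1}}\sum_x N_n(x)^\gamma \;\xrightarrow[n\to\infty]{\text{a.s.}}\; \frac{\Gamma(\gamma+1)}{(\pi A)^{\gamma-1}},$$
a critical-dimension version of Bolthausen-type self-intersection asymptotics (proven for $d=\alpha=2$ in \cite{bolthausen} and provable for $d=\alpha=1$ by the same scheme using the Cauchy local CLT). Morally, $R_n \sim \pi A\,n/\log n$ and the rescaled local time $N_n(x)/\log n$ at a typical visited $x$ is asymptotically $\mathrm{Exp}(\pi A)$, so that $\sum_x N_n(x)^\gamma \sim R_n(\log n)^\gamma \int_0^\infty u^\gamma \pi A e^{-\pi A u}\,du$.

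Setting $\Delta_i N(x):=N_{[nt_i]}(x)-N_{[nt_{i-1}]}(x)$, I would introduce the random empirical measure
$$\nu_n := \frac{\log n}{n}\sum_{x\in\ZZ^d}\delta_{(\Delta_1 N(x)/\log n,\,\ldots,\,\Delta_m N(x)/\log n)}.$$
Rescaling yields $L_n(\gamma) = \int|\sum_i \theta_i u_i|^\gamma\,d\nu_n(u)$ and $L'_n(\gamma) = \int|\sum_i \theta_i u_i|^\gamma \mathrm{sgn}(\sum_i \theta_i u_i)\,d\nu_n(u)$, so it suffices to prove that $\nu_n$ converges a.s.\ to
$$\nu := \sum_{i=1}^m \pi A\,(t_i-t_{i-1})\,\bigl[\pi A\,e^{-\pi A u_i}\,du_i\bigr]\otimes\prod_{j\ne i}\delta_0(du_j),$$
a superposition of one-dimensional exponential laws sitting on each coordinate axis with weights proportional to the piece lengths. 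Plugging $\nu$ into the integrands immediately produces \eqref{Ln} and \eqref{L'n}, noting that on the $i$-th axis $u_i\ge 0$ forces $\mathrm{sgn}(\sum_j\theta_j u_j)=\mathrm{sgn}(\theta_i)$, which is exactly the sign appearing in \eqref{L'n}.

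The content of $\nu_n\to\nu$ is asymptotic disjointness of the piece ranges: each sub-walk has range $\sim\pi A(t_i-t_{i-1})n/\log n$ while the union equals the total range $\sim\pi A t_m n/\log n$, forcing the number of sites visited by two or more pieces to be $o(n/\log n)$. The quantitative input I would prove is
$$\EE\Bigl[\sum_x \Delta_i N(x)^a \Delta_j N(x)^b\Bigr] = O\bigl(n(\log n)^{a+b-2}\bigr), \quad i\ne j,\ a,b\ge 1,$$
one logarithmic factor below the diagonal rate. For $a=b=1$ this rewrites as $\sum_{k\in I_i,\ell\in I_j}\PP(S_{\ell-k}=0)$ and is handled by the local limit theorem: since $|\ell-k|$ is of order $n$, the critical sum $\sum_k 1/k$ truncates at a constant rather than accumulating a $\log n$ factor, giving $O(n)$ overall. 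Higher-moment analogues ($a+b>2$) follow from multi-point counts treated by induction.

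The principal obstacle will be extending cleanly from integer $\gamma$ to general $\gamma > 0$. For integer $\gamma$, polynomial expansion of $(\sum\theta_i u_i)^\gamma$ combined with the cross-moment bound above isolates the diagonal $\sum_i|\theta_i|^\gamma \sum_x \Delta_i N(x)^\gamma$, which by the single-piece statement applied to each sub-walk converges to the desired limit. For general $\gamma$ I would split $\sum_x$ into sites visited by exactly one piece versus several: on the former $|\sum\theta_i u_i|^\gamma = |\theta_i|^\gamma u_i^\gamma$ holds exactly, while on the latter a power-mean inequality $|\sum\theta_i u_i|^\gamma \le C_{\gamma,m}\sum_i u_i^\gamma$ combined with the cross-moment bound shows the overlap contribution is $o(n(\log n)^{\gamma-1})$. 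Upgrading convergence in probability to almost sure convergence would go by Borel--Cantelli along a sufficiently fast polynomial subsequence $n_k = \lfloor k^c\rfloor$, with intermediate $n$ filled in by monotonicity of $N_n(x)$ in $n$.
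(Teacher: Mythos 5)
Your high-level picture is the same as the paper's: everything reduces to the single-piece Cern\'y asymptotic, and the rescaled local-time empirical measure $\nu_n$ concentrates on the positive coordinate axes with exponential marginals weighted by the piece lengths (indeed the paper realizes this picture by showing $\frac{\pi A}{\log n}\sum_i\theta_i b_{i,n}(U_n)\Rightarrow\theta_{U'}T$ for a uniformly chosen visited site $U_n$). The difference is in the tactics, and there is a genuine gap in yours.

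The gap is in the upgrade to almost-sure convergence. You propose to kill the cross contributions $\sum_x \Delta_i N(x)^a\Delta_j N(x)^b$ ($i\neq j$) via a first-moment estimate $O(n(\log n)^{a+b-2})$ and then Borel--Cantelli along $n_k=\lfloor k^c\rfloor$ with monotone interpolation. Two problems. First, Markov's inequality applied to the (nonnegative) cross term rescaled by $n(\log n)^{a+b-1}$ gives only a $O(1/\log n)$ probability bound, and $\sum_k 1/\log(k^c)=\infty$, so this is not summable even along polynomial subsequences; you would need second-moment or concentration control, which you have not set up. Second, the increments $\Delta_i N(x)=N_{[nt_i]}(x)-N_{[nt_{i-1}]}(x)$ are not monotone in $n$ (the left endpoint moves too), so the ``fill in intermediate $n$ by monotonicity of $N_n$'' step does not apply to the cross terms or to $L_n(\gamma)$ as a whole. (There is also a smaller slip in your $a=b=1$ bound: for adjacent blocks $j=i+1$ the time gap $\ell-k$ is \emph{not} of order $n$; the bound $O(n)$ still holds, but because the number of pairs with $\ell-k=m$ is $\le m$, not because $\PP(S_{\ell-k}=0)$ is uniformly $O(1/n)$.)

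The paper closes this a.s.\ issue without any new estimate, via a telescoping identity you did not use. Reducing to $\theta_1=\cdots=\theta_m=1$ (the general case is bounded by this one), for integer $k$ the total cross contribution is exactly
$$\Sigma_n(1,\ldots,1)=\sum_x N_{[nt_m]}(x)^k-\sum_{i=1}^m\sum_x\bigl(\Delta_i N(x)\bigr)^k,$$
and \emph{each} of the two sides converges $\PP$-a.s.\ after division by $n(\log n)^{k-1}$ to $\frac{\Gamma(k+1)}{(\pi A)^{k-1}}t_m$ (Cern\'y applied to the whole walk, and to each increment piece viewed as a fresh walk run for $\approx n(t_i-t_{i-1})$ steps), so $\Sigma_n=o(n(\log n)^{k-1})$ a.s.\ for free. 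For noninteger $\gamma$ the paper also proceeds differently from your ``one-piece versus multi-piece sites'' split: conditionally on $S$, it picks a uniform site $U_n$ in the range, shows all integer moments of $W_n:=\frac{\pi A}{\log n}\sum_i\theta_i b_{i,n}(U_n)$ converge to those of $\theta_{U'}T$ (using (\ref{pair}) plus $R_n\log n/n\to\pi A$), hence $W_n\Rightarrow\theta_{U'}T$ conditionally a.s., and then uses the domination of any $\gamma$-moment by an integer moment to pass to $\EE[|W_n|^\gamma\mid S]$ and $\EE[|W_n|^\gamma\mathrm{sgn}(W_n)\mid S]$. This method-of-moments route also takes care of the uniform integrability needed to pass from weak convergence of $\nu_n$ to convergence of the unbounded integrand $|\sum_i\theta_iu_i|^\gamma$, a point your empirical-measure argument leaves implicit.
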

\begin{proof}
We fix an integer $m\ge 1$ and $2m$ real numbers $\theta_1,\ldots,\theta_m,t_1,...,t_m$
such that $0<t_1<\ldots<t_m$ and we set $t_0:=0$.
To simplify notations, we write $b_{i,n}(x):= N_{[n t_i ]} (x)
-N_{[n t_{i-1}]}(x) $.
Following  the techniques developed in \cite{Cerny}, we first have to prove 
\refeq{Ln} and \refeq{L'n}  for integer $\gamma$:  
for every integer $k\ge 1$,
$\PP$-almost surely, as $n$ goes to infinity, we have
\begin{equation}\label{pair}
\frac{1}{n (\log n)^{k-1}} \sum_{x\in{\mathbb Z}^d}
 \left( \sum_{i=1}^m \theta_i b_{i,n}(x) \right)^{k}
\longrightarrow \frac{\Gamma(k+1)}{(\pi A)^{k-1}} 
\sum_{i=1}^m \theta_i ^{k} (t_i- t_{i-1}).
\end{equation}
Let us assume \refeq{pair} for a while, and let us end the proof of 
\refeq{Ln} and \refeq{L'n} for any positive real $\gamma$. 
\noindent Given the random walk $S:=(S_n)_{n}$, let $(U_n)_{n \ge 1}$ be a sequence of random variables with values in $\mathbb{Z}^d$, such that for all $n$, $U_n$ is a point chosen uniformly in 
the range of the random walk up to time $[nt_m]$, that is 
$$\PP(U_n=x \big| S) =  R_{[nt_m]}^{-1} {\bf 1}_{\{N_{[nt_m]}(x)\geq 1\}} , $$
with $R_k:=\#\{y\,:\, N_k(y)>0\}$.
Moreover, let $U'$ be a random variable with values in $\{1,\ldots,m\}$ and distribution 
$$\PP(U'=i)= (t_i- t_{i-1})/t_m$$ 
and let $T$ be a random variable with exponential distribution with parameter one and independent of $U'$.
\\
Then, for $\PP-$ almost every realization of the random walk $S$, the sequence
of random variables 
$$\left(W_n:=\frac{ \pi A}{\log(n)}  \sum_{i=1}^m \theta_i  b_{i,n}(U_n)\right)_n$$
converges in distribution to the random variable $W:=\theta_{U'} T$.
Indeed, the moment of order $k$ of $W_n$ given $S$ is
\begin{eqnarray*}
\EE(W_n^{k}\big| S) &=&\frac{(\pi A)^{k}}{ n(\log n)^{k-1}} \sum_{x\in\mathbb{Z}^d} \left( 
 \sum_{i=1}^m \theta_i b_{i,n}(x)\right)^{k} \frac{n}{ \log(n) R([nt_m])}.   
\end{eqnarray*}
Using \refeq{pair} and the fact that  $((\log n) R_n /n)_n $ converges almost surely to $\pi A$ (see \cite{DE51,LGR}), 
the moments $\EE(W_n^{k}\big| S)$ converges a.s. to 
$\EE(W^{k})=\Gamma(k+1)   \sum_{i=1}^m \theta_i ^{k} (t_i- t_{i-1})/ t_m$, which
proves the convergence in distribution of $(W_n)_n$ (given $S$) to $W$.
This ensure, in particular, the convergence
in distribution of $(|W_n|^\gamma)_n$ and of 
$(|W_n|^\gamma\text{sgn}(W_n))_n$ (given $S$)
to $|W|^\gamma$ and $|W|^\gamma\text{sgn}(W)$ respectively (for every real number
$\gamma\ge 0$  and for $\PP-$ almost every realization of the random walk $S$).  
Since any moment of $|W_n|$ can be bounded 
from above by an integer moment, we deduce that, for any $\gamma\geq 0$, 
we have $\mathbb P$-almost surely
$$ \lim_{n\rightarrow +\infty} \EE(|W_n|^{\gamma}\big| S)
     = \EE(|W|^{\gamma})\ \ \mbox{and}\ \
   \lim_{n\rightarrow +\infty} \EE(|W_n|^{\gamma}\text{sgn}(W_n)\big| S)
     = \EE(|W|^{\gamma}\text{sgn}(W)),$$ 
which proves lemma \ref{tech1}.\\*
Let us prove (\ref{pair}). Let $k\geq 1$.
According to Theorem 1 in \cite{Cerny} (proved for $\alpha=d=2$, but also
valid for $\alpha=d=1$), we have
\begin{equation}\label{EQ}
\forall i\in\{1,...,m\},\ \ \lim_{n\rightarrow +\infty}
    \frac 1{n(\log n)^{k-1}}\sum_{x\in{\mathbb Z}^d}(b_{i,n}(x))^{k}
     = \frac{\Gamma(k+1)}{(\pi A)^{k-1}}(t_i-t_{i-1}),\ {\mathbb P}-a.s..
\end{equation}
We define
\begin{equation}\label{pair2}
\Sigma_n(\theta_1,...,\theta_m)
   := \sum_{x\in {\mathbb Z}^d} 
   \left( \sum_{i=1}^m \theta_i  b_{i,n}(x)\right)^{k} - 
  \sum_{x\in{\mathbb Z}^d} \sum_{i=1}^m (\theta_i )^{k}  \left(b_{i,n}(x) \right)^{k} .
\end{equation}
According to (\ref{EQ}), it is enough to prove that $\PP-$a.s., 
$\Sigma_n(\theta_1,...,\theta_m)=o(n(\log n)^{k-1})$.
We observe that $\Sigma_n(\theta_1,...,\theta_m)$ is the sum
of the following terms
\begin{equation}\label{sumprod}
\sum_{x\in {\mathbb Z}^d} \prod_{j=1}^{k} \left(\theta_{i_j} b_{i_j,n}(x)\right).
\end{equation}
over all the $k$-tuple $(i_1,\ldots,i_{k})\in\{1,\ldots,m\}^{k}$, 
with at least two distinct indices.
We observe that
$$ |\Sigma_n(\theta_1,...,\theta_m)| \le \max(|\theta_1|,...,|\theta_m|)^{k}
     \Sigma_n(1,...,1).$$
But, we have
\begin{eqnarray*}
\Sigma_n(1,...,1)
   &=&\sum_{x\in {\mathbb Z}^d} 
   \left(  N_{[nt_m]}(x) 
  \right)^{k} - 
  \sum_{x\in{\mathbb Z}^d} \sum_{i=1}^m   \left( b_{i,n}(x)\right)^{k}\\ 
   &=& \sum_{x\in {\mathbb Z}^d} 
   \left(  N_{[nt_m]}(x) 
  \right)^{k} - 
   \sum_{i=1}^m   \sum_{x\in{\mathbb Z}^d}\left( b_{i,n}(x)\right)^{k} =
      o(n\log(n)^{k-1}),
\end{eqnarray*}
according to (\ref{EQ}).
\end{proof}
\begin{lem}\label{sup}
For any $\rho > 0$, 
$$\sup_{x\in {\mathbb Z}^d} N_n(x) = o(n^{\rho})\  \  \   \mbox{\rm  a.s.}.$$ 
\end{lem}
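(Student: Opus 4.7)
The strategy is to reduce the supremum to an $\ell^{k}$-type sum and then invoke the integer-moment asymptotics already established in Lemma \ref{tech1}. Since $N_n(x)\ge 0$ for every $x$, the elementary deterministic inequality
$$\sup_{x\in\ZZ^d} N_n(x)^{k} \;\le\; \sum_{x\in \ZZ^d} N_n(x)^{k}$$
holds for any positive integer $k$.

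Applied with $m=1$, $\theta_1 = 1$, $t_0 = 0$, $t_1 = 1$ and $\gamma = k$, Lemma \ref{tech1} (convergence (\ref{Ln})) gives
$$\frac{1}{n(\log n)^{k-1}}\sum_{x\in \ZZ^d} N_n(x)^{k} \;\xrightarrow[n\to\infty]{}\; \frac{\Gamma(k+1)}{(\pi A)^{k-1}}\qquad \PP\text{-a.s.}$$
In particular $\sum_x N_n(x)^k = O\!\pare{n(\log n)^{k-1}}$ almost surely, and taking the $k$-th root yields
$$\sup_{x\in\ZZ^d} N_n(x) \;=\; O\!\pare{n^{1/k}(\log n)^{(k-1)/k}}\qquad \PP\text{-a.s.}$$

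Given $\rho>0$, it then suffices to choose an integer $k$ with $k > 1/\rho$: for such a $k$ one has $n^{1/k}(\log n)^{(k-1)/k}=o(n^{\rho})$, which proves the claim. The real substance of Lemma \ref{sup} is therefore absorbed into Lemma \ref{tech1}, itself resting on the integer-moment estimate (\ref{EQ}) borrowed from \cite{Cerny}; the passage from those moment asymptotics to a uniform bound on the local times is a one-line deterministic computation, and there is no further obstacle to overcome.
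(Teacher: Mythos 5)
Your proof is correct, and it takes a genuinely different route from the paper. The paper disposes of Lemma \ref{sup} by citing Lemma 2.5 of \cite{bolthausen} (a direct probabilistic estimate, proved there for $d=2$), whereas you derive it internally from the almost-sure integer-moment asymptotics for the local times. Your chain of reasoning---bound the sup by the $\ell^k$-norm, invoke the $k$-th moment convergence, take $k$-th roots, and let $k$ run over a countable set to cover all $\rho>0$---is sound and has no circularity, since Lemma \ref{tech1} is proved before Lemma \ref{sup} and its proof does not use Lemma \ref{sup}. Two small remarks: you could bypass Lemma \ref{tech1} entirely and cite \eqref{EQ} directly with $i=1$, $t_1=1$, since $b_{1,n}(x)=N_n(x)$ there and \eqref{EQ} is exactly the ingredient you need; and your route has the modest virtue of treating $d=1$ and $d=2$ in a unified way via \cite{Cerny}, rather than importing a $d=2$ statement and implicitly transferring it to $d=1$. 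The trade-off is that your argument rests on the strong a.s. moment convergence of Cern\'y, which is a heavier input than the elementary tail bound on $N_n^*$ that underlies Bolthausen's lemma (and which the paper reproves anyway in Lemma \ref{lem:omega_n} via $\PP(T_0\le n)^{n^\gamma}$).
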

\begin{proof} See Lemma 2.5 in \cite{bolthausen}.
\end{proof}
\begin{proof}[Proof of Theorem \ref{thmFLT}]

Let an integer $m\ge 1$ and $2m$ real numbers $\theta_1,...,\theta_m,t_1,...,t_m$
such that $0<t_1<...<t_m$. We set $t_0:=0$.
Again, we use the notation $b_{i,n}(x):=N_{[nt_i]}(x)-N_{[nt_{i-1}]}(x)$.
Let us write $\tilde Z_n:= \frac 1{n^{1/\beta}(\log(n))^{(\beta-1)/\beta}}
     \sum_{i=1}^m\theta_i(Z_{[nt_i]}-Z_{[nt_{i-1}]})$.
We have to prove that
\begin{equation}\label{distribfinies}
{\mathbb E}[e^{i\tilde Z_n}] \rightarrow\prod_{i=1}^m\phi\left(
     \theta_i(t_i-t_{i-1})^{1/\beta}
     \left(\frac{\Gamma(\beta+1)}{(\pi A)^{\beta-1}}\right)^{1/\beta}  \right),
\end{equation}
as $n$ goes to infinity.
We observe that $\tilde Z_n= \frac 1{n^{1/\beta}(\log(n))^{(\beta-1)/\beta}}
     \sum_{x\in{\mathbb Z}^d}\sum_{i=1}^m\theta_i b_{i,n}(x)\xi_x$.
Hence we have
$${\mathbb E}[e^{i\tilde Z_n}\vert S]=\prod_{x\in{\mathbb Z}^d}\varphi_\xi\left(
     \frac{ \sum_{i=1}^m\theta_i b_{i,n}(x)}{n^{1/\beta}(\log(n))^{(\beta-1)/\beta}}
  \right). $$
Observe next that 
$$\left\vert \varphi_\xi(t)-\exp\left(-|t|^\beta(A_1+iA_2\textrm{sgn}(t)\right)\right\vert\le
|t|^\beta h(\vert t\vert) \quad \textrm{for all } t\in \RR,$$ 
with $h$ a continuous and monotone function on $[0,+\infty)$ vanishing in $0$. 
This implies in particular the existence of 
$\varepsilon_0>0$ and $\sigma>0$ such that 
$\max( |\varphi_{\xi}(t)|,  \exp\left(-A_1 |t|^\beta\right))\leq e^{-\sigma |t|^{\beta}}$ for any 
$t\in [-\varepsilon_0,\varepsilon_0]$. 
According to lemma \ref{sup}, ${\mathbb P}$-almost surely, for every $n$ large enough, we have
$$b_n:=\sup_x\frac{\vert\sum_{i=1}^m\theta_i b_{i,n}(x)\vert}
   {n^{1/\beta}(\log(n))^{(\beta-1)/\beta}}
   \le  \varepsilon_0 $$
and so
$$\left\vert{\mathbb E}[e^{i\tilde Z_n}\vert S]-\prod_{x\in{\mathbb Z}^d}
     e^{-\frac{\left|\sum_{i=1}^m\theta_i b_{i,n}(x)\right|^\beta}{n(\log(n))^{\beta-1}}
             (A_1+iA_2\text{sgn}\left( \sum_{i=1}^m\theta_ib_{i,n}(x)\right))}
 \right\vert $$
is less than $\sum_{x\in{\mathbb Z}^d}\frac{
\left|\sum_{i=1}^m\theta_i b_{i,n}(x)\right|^\beta}{n(\log(n))^{\beta-1}}
h(b_n)e^{-\sigma\left(\frac{\sum_{y\in\mathbb Z} \left\vert
       \sum_{i=1}^m\theta_ib_{i,n}(y)  \right\vert^\beta}{n(\log n)^{\beta-1}}-b_n^\beta\right)}$.
Hence, according to lemmas \ref{tech1} and \ref{sup}, ${\mathbb P}$-almost surely, we have
$$\lim_{n\rightarrow +\infty}{\mathbb E}[e^{i\tilde Z_n}\vert S] =
     e^{-\frac{\Gamma(\beta+1)}{(\pi A)^{\beta-1}}\sum_{i=1}^m
            |\theta_i|^\beta(t_i-t_{i-1}) (A_1+iA_2\text{sgn}\left(\theta_i\right))}
$$
which gives (\ref{distribfinies}) thanks to the Lebesgue dominated convergence theorem.
\\*
\\*
Finally we prove that the sequence 
$$\left(\left(\frac{Z_{[nt]}}{n^{1/\beta}  \log(n)^{(\beta-1)/\beta}} 
 \right)_{t\in [0;1]} \right)_{n\ge 2}$$ is not
tight in  $\D([0,\infty))$. It is enough to prove that it is not tight 
in $\D([0,1])$. To this aim, let 
$b_n = n^{1/\beta}  \log(n)^{(\beta-1)/\beta}$, and 
$(Z_n(t), t \in [0,1])$ denote the linear interpolation of $(Z_{[nt]}, 
t \in [0,1])$, i.e.
\[ Z_n(t) = Z_{[nt]} + (nt-[nt]) \xi_{S_{[nt]}} \, .\]
Then, $\forall \epsilon > 0$, 
\begin{eqnarray*}
 \PP \cro{\sup_{t \in [0,1]} \va{Z_n(t) - Z_{[nt]}} \ge \epsilon b_n}
 & =  &  \PP \cro{\max_{i=0}^{n -1} \va{\xi_{S_i}} \ge \epsilon b_n }
 \\
 & = & \PP \cro{\exists x \in \acc{S_0, \cdots, S_{n-1}}  \mbox{ s.t }  \va{\xi_x} \ge \epsilon b_n}
  \\
 & \le & \EE(\#\acc{S_0, \cdots, S_{n-1}})  \PP \cro{ \va{\xi_0} \ge \epsilon b_n}
 \\
 & \le & C \frac{n}{\log(n)} \epsilon^{-\beta} b_n^{-\beta} = 
     C \epsilon^{-\beta} \log(n)^{-\beta}, 
 \end{eqnarray*}
 where the last inequality comes from \refeq{queue} and  Theorem 6.9 of \cite{LGR}. 
 Therefore, if $\left(\left(\frac{Z_{[nt]}}{b_n}  \right)_{t\in [0;1]} 
\right)_{n\ge 2}$ converges weakly to 
 $\left(\tilde Y_t\right)_{t\in [0,1]}$, 
the same is true for $\left(\left(\frac{Z_n(t)}{b_n}  \right)_{t\in [0;1]}
\right)_{n\ge 2} $. Using the fact that the sequence 
$\left(\left(\frac{Z_n(t)}{b_n}  \right)_{t\in [0;1]}\right)_{n\ge 2}$ is
 a sequence in the space $\CC([0,1])$ and that the Skorohod $J_1$-topology coincides 
 with the uniform one when restricted to $\CC([0,1])$, one deduces that $\left(\frac{Z_n(t)}{b_n}  \right)_{t\in [0;1]}$ converges weakly in $\CC([0,1])$, and that the limiting
 process $\left(\tilde Y_t\right)_{t\in [0,1]}$ is therefore continuous, which is false 
as soon as $\beta < 2$. 
 \end{proof}
\section{Proof of the local limit theorem in the lattice case}\label{sec:TLL}
\subsection{The event $\Omega_n$.}
\label{sec:omega_n} 
Set
\[ N_n^* := \sup_y N_n(y) \quad \textrm{and} \quad R_n := \#\{y\ :\ N_n(y)>0\} \, .
\]
\begin{lem}\label{lem:omega_n}
For every $n\ge 1$ and $1>\gamma > 0$, set
\[
\Omega_n=\Omega_n(\gamma) := \acc{R_n \le \frac{n }{(\log\log(n))^{1/4}}  
\ \mbox{and}\ N_n^*\le n^{\gamma}  }.
\]
Then, $\PP(\Omega_n) =1 - o(b_n^{-1})$.
Moreover, the following also holds on $\Omega_n$: 
\begin{eqnarray}
\label{minVn}
(\log\log(n))^{1/4} \le N_n^*\ \ \mbox{and}\ \ 
V_n\ge n^{1-\gamma(1-\beta)_+}.
\end{eqnarray}
\end{lem}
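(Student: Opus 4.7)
The plan is to split the claim into two parts: (i) the deterministic consequences on $\Omega_n$, which follow from elementary manipulations of the local-time identity $\sum_y N_n(y)=n$, and (ii) the probabilistic bound $\PP(\Omega_n^c)=o(b_n^{-1})$, which requires polynomial-order tail estimates for $R_n$ and $N_n^*$.

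For part (i), I would use that $\sum_y N_n(y)=n$ with exactly $R_n$ nonzero terms, so $N_n^*\ge n/R_n$. On $\Omega_n$ the defining bound on $R_n$ then forces $N_n^*\ge(\log\log n)^{1/4}$, yielding the first assertion. For $V_n$, understood as $\sum_y N_n(y)^\beta$ (the natural quantity controlling $\vert\EE[e^{iuZ_n}\vert S]\vert$ in the characteristic-function analysis to follow), I would separate the cases $\beta\ge 1$ and $\beta<1$. When $\beta\ge 1$ one has $(1-\beta)_+=0$, and since $N_n(y)^\beta\ge N_n(y)$ whenever $N_n(y)\ge 1$, it follows that $V_n\ge\sum_y N_n(y)=n$. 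When $\beta<1$, writing $N_n(y)^\beta=N_n(y)\,N_n(y)^{\beta-1}\ge N_n(y)\,(N_n^*)^{\beta-1}$ (the exponent $\beta-1$ being negative), summing, and using $N_n^*\le n^\gamma$ on $\Omega_n$, one obtains $V_n\ge n\cdot n^{\gamma(\beta-1)}=n^{1-\gamma(1-\beta)}$, as required.

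For part (ii), I must establish both $\PP(R_n>n/(\log\log n)^{1/4})=o(b_n^{-1})$ and $\PP(N_n^*>n^\gamma)=o(b_n^{-1})$, where $b_n^{-1}=n^{-1/\beta}(\log n)^{-(\beta-1)/\beta}$ decays polynomially in $n$. Markov's inequality applied to $\EE[R_n]\asymp n/\log n$ and $\EE[N_n^*]\asymp\log n$ is far too weak, so the strategy is to invoke high-moment bounds. For $R_n$, concentration estimates for the range in the critical recurrent regimes $\alpha=d\in\{1,2\}$ (cf. \cite{bolthausen}, \cite{Cerny}, \cite{LGR}) provide all polynomial moments of $R_n\log(n)/n$; a Markov bound at moment order $p$ sufficiently large then gives a decay faster than $b_n^{-1}$. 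For $N_n^*$, moment bounds of the form $\EE[(N_n^*)^p]\le C_p(\log n)^{cp}$ are available (the same references underlying Lemma \ref{sup}); Markov then yields $\PP(N_n^*>n^\gamma)\le C_p(\log n)^{cp}n^{-\gamma p}$, which is $o(b_n^{-1})$ once $p$ is chosen large.

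The main obstacle is thus securing these polynomial tail estimates for $R_n$ and $N_n^*$ in the critical cases $\alpha=d\in\{1,2\}$, where the typical scales $n/\log n$ and $\log n$ are delicate; once these are in hand, the rest of the argument is purely bookkeeping. The deterministic part (i) is immediate from the elementary estimates above.
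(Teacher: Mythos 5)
Your part (i) — the deterministic consequences on $\Omega_n$ — is correct and matches the paper's argument: $N_n^*\ge n/R_n$ gives the lower bound on $N_n^*$, and the splitting $N_n(y)^\beta = N_n(y)\,N_n(y)^{\beta-1}\ge N_n(y)(N_n^*)^{\beta-1}$ for $\beta<1$ gives the lower bound on $V_n$. Your treatment of $N_n^*$ in part (ii) is also workable: since the threshold $n^\gamma$ exceeds the typical size of $N_n^*$ by a \emph{polynomial} factor, a Markov bound at a large \emph{fixed} moment order does give a decay faster than $b_n^{-1}$ (in fact the paper obtains a much stronger, stretched-exponential bound via $\PP(N_n^*\ge n^\gamma)\le\EE[R_n]\,\PP(T_0\le n)^{n^\gamma}$, but your route suffices).

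The gap is in the $R_n$ tail bound. The threshold $n/(\log\log n)^{1/4}$ exceeds $\EE[R_n]\asymp n/\log n$ only by a factor $\log n/(\log\log n)^{1/4}$: the deviation is at a \emph{logarithmic}, not polynomial, scale. For any fixed $p$, a Markov bound using $\sup_n\EE\bigl[(R_n\log n/n)^p\bigr]<\infty$ yields
\[
\PP\!\left(R_n>\frac{n}{(\log\log n)^{1/4}}\right)
\;\le\; C_p\left(\frac{(\log\log n)^{1/4}}{\log n}\right)^{p},
\]
which decays only polylogarithmically in $n$. This is never $o(b_n^{-1})$, since $b_n^{-1}\asymp n^{-1/\beta}$ decays polynomially. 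To beat a polynomial rate with moments, you would have to let $p$ grow at least like $\log n/\log\log n$ and control exponential moments of $R_n\log n/n$ uniformly in $n$ — a genuinely stronger input than "all polynomial moments," and not something the cited references hand you directly. The paper circumvents this by exploiting the subadditivity of the range, $\PP(R_n\ge a+b)\le\PP(R_n\ge a)\,\PP(R_n\ge b)$ (from \cite{Chen}): it applies Chebyshev at the modest scale $a\asymp n(\log\log n)^{1/4}/\log n$ to get a small but only polylogarithmically small probability, and then raises that to the power $\asymp\log n/(\log\log n)^{1/2}$ to obtain a superpolynomial bound $\exp\bigl(-\log n\sqrt{\log\log n}(1+o(1))\bigr)$. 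Without this iteration (or an equivalent exponential-moment input), the $R_n$ half of your argument does not close.
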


\begin{proof}

We first prove that 
\begin{equation}
\label{BSRange0}
\PP\pare{ R_n \geq n (\log\log (n))^{-1/4} } 
        =o(b_n^{-1}).
\end{equation}
Let us recall that for every $a, b \in \NN$, we have 
\begin{equation} 
\label{SousaddRange0}
\PP(R_n \geq a+b) \leq \PP(R_n \geq a) \PP(R_n \geq b) \, .
\end{equation} 
The proof is given for instance in \cite{Chen}.
We will moreover use the fact that ${\mathbb E}[R_n]\sim cn(\log(n))^{-1}$
and $Var(R_n)=O\left(n^2\log^{-4}(n)\right)$ (see \cite{LGR}).
Hence, for $n$ large enough, there exists $C>0$ such that we have 

$\displaystyle
\PP\left( R_n \geq \frac n{(\log\log (n))^{1/4}}   \right) 
 \leq  \PP\left( R_n \geq \floor{ \frac{n (\log\log(n))^{1/4}}{\log(n)}} \right)
       ^{\floor{ \log(n)(\log\log(n))^{-1/2} }  }\\
$
\begin{eqnarray*} 
& \leq & \PP\left( |R_n-{\mathbb E}[R_n]| \geq \frac 12
     \floor{ \frac{n (\log\log(n))^{1/4}}{\log(n)} }
            \right)
       ^{\floor{ \log(n)(\log\log(n))^{-1/2} }  }\\
& \leq & \left(\frac{5 Var(R_n)\log^2(n)}{n^2 (\log\log(n))^{1/2}} \right)
       ^{\floor{ \log(n)(\log\log(n))^{-1/2} }  }\\
& \leq & \left(\frac{C n^2\log^2(n)/\log^4(n)}{n^2 \sqrt{\log\log(n)}} \right)
       ^{\floor{ \log(n)(\log\log(n))^{-1/2} }  }\\
& \leq & \left(\frac{C} {(\log(n))^2} \right)
       ^{\floor{ \log(n)(\log\log(n))^{-1/2} }  }=
  \exp\left(-\log(n) \sqrt{\log\log(n)}\left(1-\frac{\log(C) }{2\log\log(n)}\right)\right).
\end{eqnarray*} 
This ends the proof of \refeq{BSRange0}. 

Let us now prove that
\begin{equation}
\label{N*_n}
\PP\cro{ N^*_n \geq  n^\gamma  } =o(b_n^{-1}).
\end{equation}
We have
\begin{eqnarray*} 
\PP(N^*_n \geq n^{\gamma}) 
& \leq & 
\sum_x \PP(N_n(x) \geq  n^{\gamma}) 
\\
& =  & \sum_x \PP(T_x \le n ; N_n(x) \geq  n^{\gamma}) \, , \,\,
\mbox{where } T_x := \inf\acc{n > 1, \mbox{ s.t. } S_n =x}\, ,
\\
& \le & \sum_x \PP(T_x \le n) \PP(N_n(0) \ge n^{\gamma}) 
\\
& \le &
\EE[R_n] \PP(T_0 \le n)^{n^{\gamma}}.
\end{eqnarray*} 

Hence, \refeq{N*_n} follows now from  $\EE[R_n] \sim cn(log(n))^{-1}$, and 
 from   $\PP(T_0 > n) \sim C/\log(n)$.
 
Since $n= \sum_y N_n(y) \le R_nN_n^*$, 
we get that $N_n^*\ge \frac n{R_n}\ge
\pare{\log\log(n)}^{1/4}$ on $\Omega_n$. 

To prove the lower bound for $V_n$, note that for $\beta \ge 1$, 
$V_n = \sum_y N_n(y)^{\beta} \ge \sum_y N_n(y)=n$. For 
$\beta < 1$,  on $\Omega_n$, 
\[
n= \sum_y N_n(y)= \sum_y N_n(y)^{\beta} N_n(y)^{1-\beta}  \le   
V_n  (N_n^*)^{1-\beta} \le V_n n^{\gamma(1-\beta)} \, .
\]
\end{proof}

%%%%%%%%%%%%%%%%%%%%%%%%%%%%%%%%%%%%%%%%%%%%%%%%%%%%%%%%%%%%%%%%%%%%%%%%%%%%%%%%%%%%%%%%%%%%%%%%%%%%%%%%%%%%%%%%%%%%%%%%%%%%%%%%%%%%%%%%
%%%%%%%%%%%%%%%%%%%%%%%%%%%%%%%%%%%%%%%%%%%%%%%%%%%%%%%%%%%%%%%%%%%%%%%%%%%%%%%%%%%%%%%%%%%%%%%%%%%%%%%%%%%%%%%%%%%%%%%%%%%%%%%%%%%%%%%%%
%
%
\subsection{Scheme of the proof.}\label{sec:scheme}
It is easy to see (cf the proof of lemma 5 in \cite{BFFN})  that 
$\PP\pare{Z_n = \floor{b_nx}} = 0$ if $\PP\pare{n \xi_0 - \floor{b_nx} 
\notin d_0 \mathbb{Z}}=1$, and that if $\PP\pare{n \xi_0 - \floor{b_nx}
\in d_0 \mathbb{Z}}=1$, 
\[ \PP\pare{Z_n = \floor{b_nx}} = \frac {d_0}{2\pi}
\int_{-\frac{\pi}{d_0}}^{\frac\pi {d_0}}
e^{-it \floor{b_n x}}
{\mathbb E}\left[\prod_y \varphi_\xi(tN_n(y))\right]
\, dt\, .
\]
In view of lemma \ref{lem:omega_n}, 
we have to estimate 
$$
\frac {d_0}{2\pi}\int_{-\frac{\pi}{d_0}}^{\frac\pi {d_0}}
e^{-it \floor{b_n x}}
{\mathbb E}\left[\prod_y \varphi_\xi(tN_n(y)){\bf 1}_{\Omega_n}\right]
\, dt\, .
$$
This is done in several steps 
presented in the following propositions. 

\begin{prop}\label{lem:equivalent} 
Let $\gamma\in (0,1/(\beta+1))$ and $\delta\in (0,1/(2\beta))$ s.t. 
$\gamma \frac{(1-\beta)_+}{\beta}<\delta< 1/\beta-\gamma$. 
Then, we have
\[
\frac {d_0} {2\pi} \int_{\{|t| \le  n^\delta/b_n \} }
e^{-it \floor{b_n x}}  \EE \cro{\prod_y \varphi_{\xi}(tN_n(y))
   {\bf 1}_{\Omega_n} } \, dt 
= d_0\frac{C(x)}{b_n} + o( b_n^{-1}) \, ,
\]
uniformly in $x\in\mathbb R$.
\end{prop}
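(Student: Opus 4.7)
After the change of variable $t=u/b_n$ in the integral, the statement is equivalent to showing
$$
\frac{1}{2\pi}\int_{|u|\le n^\delta}e^{-iu\lfloor b_nx\rfloor/b_n}F_n(u)\,du\longrightarrow C(x)
$$
uniformly in $x\in\RR$, where $F_n(u):=\EE[\prod_y\varphi_\xi(uN_n(y)/b_n)\mathbf{1}_{\Omega_n}]$ and $C$ is the density of $\tilde Y_1$, whose Fourier transform is $\phi(u)=\exp(-|u|^\beta W_\infty(A_1+iA_2\,\text{sgn}(u)))$ with $W_\infty:=\Gamma(\beta+1)/(\pi A)^{\beta-1}$. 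Writing $2\pi C(x)=\int_\RR e^{-iux}\phi(u)\,du$, the triangle inequality splits the error into three $x$-independent pieces: (a) the $L^1$ difference $\int_{|u|\le n^\delta}|F_n-\phi|\,du$, (b) the correction $b_n^{-1}\int|u|\,|\phi(u)|\,du=O(b_n^{-1})$ obtained by replacing $\lfloor b_nx\rfloor/b_n$ by $x$, and (c) the tail $\int_{|u|>n^\delta}|\phi|\,du$, negligible since $\phi$ is a stable density. Parts (b) and (c) are immediate; the substance lies in (a).

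\textbf{Approximating the random product.} Set $g_n(u):=\exp(-|u|^\beta(A_1+iA_2\,\text{sgn}(u))\,V_n/b_n^\beta)$ with $V_n:=\sum_yN_n(y)^\beta$. On $\Omega_n$ and for $|u|\le n^\delta$, the constraint $\delta+\gamma<1/\beta$ forces $|uN_n(y)/b_n|\le n^{\delta+\gamma}/b_n=:\eta_n\to 0$, so the small-argument expansion $\varphi_\xi(s)=\exp(-|s|^\beta(A_1+iA_2\,\text{sgn}(s)))(1+\varepsilon(s))$ with $|\varepsilon(s)|\le C|s|^\beta h(|s|)$ is in force. Multiplying over $y$,
$$
\prod_y\varphi_\xi(uN_n(y)/b_n)=g_n(u)(1+R_n(u)),\qquad|R_n(u)|\le\exp\!\bigl(Ch(\eta_n)|u|^\beta V_n/b_n^\beta\bigr)-1.
$$
Lemma~\ref{tech1} applied with $\gamma=\beta$, $m=1$, $t_1=1$, $\theta_1=1$ yields $V_n/b_n^\beta\to W_\infty$ almost surely, whence $g_n(u)\to\phi(u)$ pointwise on $\Omega_n$.

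\textbf{Integrated convergence and main obstacle.} To upgrade pointwise convergence to the $L^1$ estimate (a), decompose $\Omega_n=\Omega_n^+\sqcup\Omega_n^-$ with $\Omega_n^+:=\Omega_n\cap\{V_n/b_n^\beta\in[W_\infty/2,2W_\infty]\}$. On $\Omega_n^+$, $|g_n(u)|\le\exp(-A_1W_\infty|u|^\beta/2)$ is an integrable dominant in $u$, so Lebesgue's dominated convergence delivers $\int_\RR|\EE[g_n\mathbf{1}_{\Omega_n^+}]-\phi|\,du\to 0$. Using the elementary inequality $e^{-A_1z}(e^{\theta z}-1)\le\theta z\,e^{-(A_1-\theta)z}$ with $\theta=Ch(\eta_n)<A_1/2$ (eventually), the multiplicative residual integrates to
$$
\int|g_n R_n|\,du\le C'h(\eta_n)\,(V_n/b_n^\beta)^{-1/\beta}\quad\text{on }\Omega_n^+,
$$
whose expectation is $O(h(\eta_n))=o(1)$ since $(V_n/b_n^\beta)^{-1/\beta}$ is bounded there. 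The ``bad'' contribution is bounded by $2n^\delta\,\PP(\Omega_n^-)$; as Lemma~\ref{tech1} yields only almost sure (unquantified) convergence of $V_n/b_n^\beta$, \emph{the main obstacle} is proving $\PP(\Omega_n^-)=o(n^{-\delta})$. I would address this by enlarging $\Omega_n$ in the spirit of Lemma~\ref{lem:omega_n}, directly incorporating a quantitative event $\{|V_n/b_n^\beta-W_\infty|\le\varepsilon_n\}$ whose complement is controlled by $L^2$ or higher-moment estimates on $V_n$ in the style of \cite{Cerny}; the hypothesis $\delta<1/(2\beta)$ presumably enters at this moment-bound step. Combining (a), (b), (c) then yields the proposition.
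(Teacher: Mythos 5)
Your approximation of $\prod_y\varphi_\xi(uN_n(y)/b_n)$ by $g_n(u)=\exp(-|u|^\beta(A_1+iA_2\,\mathrm{sgn}(u))V_n/b_n^\beta)$ runs parallel to the paper's Lemma~\ref{lem:gaussian} (the paper uses a telescoping sum to get an additive error, but the estimate is the same in spirit and also relies on Lemma~\ref{lem:borne} to bound $\EE[V_n^{-1/\beta}]$ by $O(b_n^{-1})$). The gap is in what you do next. By insisting on the $L^1(du)$ comparison $\int_{|u|\le n^\delta}|F_n(u)-\phi(u)|\,du\to 0$, you create the need for $\PP(\Omega_n^-)=o(n^{-\delta})$, i.e.\ a polynomial-rate concentration of $V_n/b_n^\beta$. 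You flag this yourself as ``the main obstacle,'' and it is a real one: Lemma~\ref{tech1} gives only almost sure convergence with no rate, $\delta$ can be taken as close to $1/(2\beta)$ as one likes (so the required rate is essentially $n^{-1/(2\beta)}$, arbitrarily large for small $\beta$), and such a concentration estimate is not established either in the paper or in \cite{Cerny}. Also note that the hypothesis $\delta<1/(2\beta)$ is not a moment-bound condition as you guess; in the paper it serves solely to make the floor-correction error $O(n^{2\delta}b_n^{-2})$ into $o(b_n^{-1})$ after replacing $\floor{b_nx}$ by $b_nx$.

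The paper sidesteps the concentration issue entirely, and this is the idea missing from your proposal. In Lemma~\ref{le13}, it performs the Fourier inversion \emph{inside} the expectation, conditionally on $V_n$: with $W_n:=b_nV_n^{-1/\beta}$ and $f$ the stable density of the limit, the main term becomes $b_n^{-1}\EE[W_nf(xW_n)\mathbf{1}_{\Omega_n}]$ plus a tail $J_{n,x}$ controlled on $\Omega_n$ by the lower bound on $V_n$ from \eqref{minVn}. The crucial point is that the function $g_x(z)=zf(xz)$ has $\sup_{x,z}|g_x'(z)|=\sup_u|f(u)+uf'(u)|<\infty$, hence is Lipschitz with a constant independent of $x$; combined with the $L^1$ convergence of $W_n$ to the constant $\Gamma(\beta+1)^{-1/\beta}(\pi A)^{1-1/\beta}$ (a.s.\ convergence from Lemma~\ref{tech1} plus uniform integrability from Lemma~\ref{lem:borne}), this yields $\EE[g_x(W_n)]=C(x)+o(1)$ uniformly in $x$. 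Because $W_nf(xW_n)$ is uniformly bounded, the ``bad'' event $\Omega_n^-$ never has to be weighted by the length $2n^\delta$ of the $u$-integration interval, and no quantitative bound on $\PP(\Omega_n^-)$ is needed. To repair your argument you should adopt this order of operations: integrate in $u$ for each fixed realization of $V_n$ first, then take expectation.
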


Recall next that the characteristic function $\phi$ of the limit distribution 
of $\left(n^{-1/\beta}\sum_{k=1}^n\xi_{ke_1}\right)_n$ 
has the following form~: 
$$\phi(u)=e^{- |u|^\beta (A_1+iA_2sgn(u)) },$$
with $0<A_1<\infty$ and $|A_1^{-1}A_2|\le|\tan(\pi\beta/2)|$. 
It follows that the characteristic function $\varphi_\xi$ of $\xi_0$ satisfies: 
\begin{eqnarray}
\label{phi0}
1-\varphi_\xi(u)\sim  |u|^\beta (A_1+iA_2sgn(u)) \quad \textrm{when } u\to 0.
\end{eqnarray}
Therefore there exist constants $\varepsilon_0>0$ and $\sigma>0$ such that
\begin{eqnarray}
\label{majorationphi} 
\max(\vert\phi(u)\vert,\vert\varphi_\xi(u)\vert)\le
\exp\left(-\sigma|u|^\beta\right) \quad \textrm{for all }u\in [-\varepsilon_0,\varepsilon_0].
\end{eqnarray}

\noindent Since $\overline{\varphi_{\xi}(t)} =
 \varphi_{\xi}(-t)$ for every $t\ge 0$, the  following propositions 
achieve the proof of Theorem \ref{thmTLL}:

\begin{prop}\label{sec:step1}
Let $\delta$ and $\gamma$ be as in Proposition \ref{lem:equivalent}. 
Then there exists $c>0$ such that
$$
\int_{ n^\delta/b_n }^{\varepsilon_0 n^{-\gamma} }
 {\mathbb E}\left[\prod_y \vert \varphi_\xi(tN_n(y))\vert {\bf 1}_{\Omega_n}\right]
\, dt= o(e^{-n^c}).$$
\end{prop}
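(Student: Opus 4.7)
The plan is to exploit the two-sided control on $t N_n(y)$ available on $\Omega_n$ in this regime: when $|t|\le \varepsilon_0 n^{-\gamma}$ and $N_n^* \le n^{\gamma}$, every argument $tN_n(y)$ lies in $[-\varepsilon_0,\varepsilon_0]$, so the Gaussian-type bound \refeq{majorationphi} applies uniformly. Concretely, I would first write
\[
\prod_y \va{\varphi_\xi(tN_n(y))} \;\le\; \exp\!\pare{-\sigma t^\beta \sum_y N_n(y)^\beta} \;=\; \exp\!\pare{-\sigma t^\beta V_n},
\]
with $V_n := \sum_y N_n(y)^\beta$. This is the whole point of choosing the upper cut-off at $\varepsilon_0 n^{-\gamma}$: it turns the product over $y$ into an explicit exponential of $V_n$.

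Next, I would plug in the deterministic lower bounds on $\Omega_n$ from Lemma \ref{lem:omega_n}, namely $V_n \ge n^{1-\gamma(1-\beta)_+}$, together with the lower bound $t \ge n^\delta/b_n$. Since $b_n^\beta = n(\log n)^{\beta-1}$, this gives
\[
t^\beta V_n \;\ge\; \frac{n^{\beta\delta}}{n(\log n)^{\beta-1}}\, n^{1-\gamma(1-\beta)_+} \;=\; \frac{n^{\beta\delta-\gamma(1-\beta)_+}}{(\log n)^{\beta-1}}.
\]
The admissibility condition $\delta > \gamma(1-\beta)_+/\beta$ imposed in Proposition \ref{lem:equivalent} is exactly what makes the exponent $\eta:=\beta\delta-\gamma(1-\beta)_+$ strictly positive. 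Picking any $c\in (0,\eta)$, the logarithmic factor is absorbed for $n$ large and one obtains $\sigma t^\beta V_n \ge n^c$ uniformly in $t \in [n^\delta/b_n,\varepsilon_0 n^{-\gamma}]$ on $\Omega_n$.

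Finally I would conclude by combining the pointwise bound $\prod_y |\varphi_\xi(tN_n(y))| \ind_{\Omega_n}\le e^{-n^c}$ with the trivial length of the integration interval (which is at most $\varepsilon_0 n^{-\gamma}$), yielding
\[
\int_{n^\delta/b_n}^{\varepsilon_0 n^{-\gamma}} \EE\cro{\prod_y \va{\varphi_\xi(tN_n(y))}\ind_{\Omega_n}}\,dt \;\le\; \varepsilon_0\, n^{-\gamma}\, e^{-n^c} \;=\; o(e^{-n^{c'}})
\]
for any $c'<c$. There is no real obstacle here: the argument is essentially a bookkeeping exercise, and the only subtle point is checking that the range of $(\gamma,\delta)$ prescribed in Proposition \ref{lem:equivalent} is tight enough to make both $|tN_n^*|\le \varepsilon_0$ and $\beta\delta-\gamma(1-\beta)_+>0$ hold simultaneously — which it does by design.
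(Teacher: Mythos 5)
Your proof is correct and takes essentially the same route as the paper: both start from the uniform bound $\prod_y\va{\varphi_\xi(tN_n(y))}\le e^{-\sigma t^\beta V_n}$ on $\Omega_n$ (valid since the cut-off $t\le\varepsilon_0 n^{-\gamma}$ together with $N_n^*\le n^\gamma$ keeps every argument in $[-\varepsilon_0,\varepsilon_0]$), and both then exploit the lower bounds $V_n\ge n^{1-\gamma(1-\beta)_+}$ and $t\ge n^\delta/b_n$ plus the condition $\delta>\gamma(1-\beta)_+/\beta$. The only cosmetic difference is that the paper first changes variables $s=tV_n^{1/\beta}$ and then inserts these bounds into the resulting integral, whereas you insert them directly to obtain a pointwise bound $e^{-n^c}$ on the integrand and multiply by the length of the interval.
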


%\begin{prop}\label{sec:step2}
% Then there exists $c>0$ such that
%\[
%\int_{\varepsilon_0 n^{-1/2 + \delta}}^{ \frac{\pi}{d}}
%{\mathbb E}\left[\prod_y \vert\varphi_\xi(tN_n(y))\vert
%{\bf 1}_{\Omega_n}\right] \, dt
%=o(e^{-n^c}) \, .
%\]
%\end{prop}

\begin{prop}\label{sec:step3}
There exists $c>0$ such that 
$$\int_{\varepsilon_0n^{-\gamma}}^{\frac \pi {d_0}}
{\mathbb E}\left[\prod_y \vert\varphi_\xi(tN_n(y))\vert{\bf
1}_{\Omega_n}\right] \, dt
=o(e^{-n^c}).$$
\end{prop}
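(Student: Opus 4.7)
The plan is to exploit that $|\varphi_\xi(u)|<1$ strictly outside the lattice $\frac{2\pi}{d_0}\ZZ$, together with the existence of many sites visited exactly once by the walk in the critical regime $\alpha=d$.

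First I would establish the quantitative bound
$$|\varphi_\xi(u)| \le \exp(-c\min(|u|^\beta,\,1)) \quad \text{for } u\in[-\pi/d_0,\pi/d_0],$$
by combining the continuity and $\frac{2\pi}{d_0}$-periodicity of $|\varphi_\xi|$, the lattice hypothesis $\acc{u\, :\, |\varphi_\xi(u)|=1}=\frac{2\pi}{d_0}\ZZ$, the local expansion \refeq{phi0}, and \refeq{majorationphi}. In particular, for $t\in[\varepsilon_0 n^{-\gamma},\pi/d_0]$ this yields $|\varphi_\xi(t)|\le\exp(-c' n^{-\gamma\beta})$ for some $c'>0$ and $n$ large enough, irrespective of whether $t$ is in the ``small'' zone $[\varepsilon_0 n^{-\gamma},\varepsilon_0]$ (where the bound comes from \refeq{majorationphi}) or the ``macroscopic'' zone $[\varepsilon_0,\pi/d_0]$ (where $|\varphi_\xi|$ is bounded by a constant strictly less than $1$).

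Next I would invoke the peak count $p_n:=\#\acc{y\, :\, N_n(y)=1}$, with the adapted definition from Section 3.5 in the two-dimensional case. The classical estimate in the critical regime $\alpha=d$ gives $\EE[p_n]\sim cn/\log n$; an exponential-type concentration argument should produce an event $\tilde\Omega_n\subset\Omega_n$ with $\PP(\Omega_n\setminus\tilde\Omega_n)=o(e^{-n^a})$ for some $a>0$, on which $p_n\ge n^b$ for some $b\in(\gamma\beta,1)$. Such a $b$ exists because the condition $\gamma<1/(\beta+1)$ in Proposition \ref{lem:equivalent} forces $\gamma\beta<\beta/(\beta+1)<1$. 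Bounding by $1$ the factors of $\prod_y|\varphi_\xi(tN_n(y))|$ coming from sites with $N_n(y)\ge 2$, one obtains on $\tilde\Omega_n$
$$\prod_y|\varphi_\xi(tN_n(y))|\le|\varphi_\xi(t)|^{p_n}\le\exp\bigl(-c' n^{b-\gamma\beta}\bigr).$$
Integration over the bounded interval $[\varepsilon_0 n^{-\gamma},\pi/d_0]$ then yields an $o(e^{-n^c})$ contribution for some $c>0$, and the contribution coming from $\Omega_n\setminus\tilde\Omega_n$ is controlled by $\PP(\Omega_n\setminus\tilde\Omega_n)=o(e^{-n^a})$.

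The main obstacle is the exponential concentration $\PP(p_n<n^b)=o(e^{-n^a})$. A direct variance estimate on $p_n$ gives only polynomial control -- exactly the situation faced for $R_n$ itself, where the super-polynomial bound \refeq{BSRange0} is obtained only by bootstrapping variance bounds through the subadditive inequality \refeq{SousaddRange0}. The whole point of the adaptation of the ``peak'' notion for $d=2$ alluded to in Section 3.5 is to furnish an analogous subadditive (or approximately subadditive) structure for the lower tail of $p_n$, enabling a comparable bootstrap; this is the technical heart of the argument.
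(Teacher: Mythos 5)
The approach diverges from the paper's at a point you yourself flag as "the technical heart," and it is exactly there that the proposal breaks down. You propose to take $p_n=\#\{y:N_n(y)=1\}$ and to obtain $\PP(p_n<n^b)=o(e^{-n^a})$ by a subadditivity bootstrap analogous to \eqref{SousaddRange0}; but there is no such (sub- or super-)additive structure for the number of sites visited \emph{exactly} once (a site counted by $p_n$ can be "demoted" by later visits, so the quantity is not monotone under concatenation of paths), and the paper does not use any such argument. In the actual proof, the exponential concentration comes from an entirely different source: one fixes a deterministic excursion pattern $\C^{\pm}$ of length $T$, so that the events $\{(X_{kT+1},\dots,X_{(k+1)T})=\C^\pm\}$ for $k=0,\dots,\lfloor n/T\rfloor-1$ are genuinely i.i.d.\ Bernoulli, and the event $\D_n$ that at least $np/(2T)$ of them occur has probability $1-o(e^{-cn})$ by Chernoff. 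On $\Omega_n\cap\D_n$ a purely deterministic pigeonhole (Lemma \ref{sec:p_n}) then gives that at least $c'n^{1-\gamma}$ sites support $\gtrsim(\log\log n)^{1/4}$ such loops. The "adaptation of peaks" for $d=2$ you allude to is nothing more than choosing a suitable excursion shape in $\ZZ^2$; it has nothing to do with subadditivity for a lower tail.

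There is a second, more structural difficulty with restricting to $N_n(y)=1$. Even the paper's loop construction cannot guarantee that the relevant site $Y_i+B$ at the top of a loop is visited exactly once, because the recurrent walk may come back to $Y_i+B$ at times outside the designated loop intervals (this is the role of $N_n^0(Y_i+B)$). That is precisely why the paper cannot simply write $|\varphi_\xi(tN_n(Y_i+B))|=|\varphi_\xi(t)|$, and instead has to carry out the conditional-binomial argument of Lemmas \ref{independance}, \ref{sec:liminf} and \ref{sec:lem0}: conditionally, $N_n(Y_i+B)$ is a binomial shift of an unknown $N_n^0(Y_i+B)$, and one shows this lands in the set $\I$ where $tN_n(Y_i+B)$ is far from $\frac{2\pi}{d_0}\ZZ$ with probability at least $1/3$, then applies a second Chernoff bound. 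Your strategy shortcuts this by assuming away the multiply-visited sites, but you have not produced the mechanism that makes that possible, and the asymptotics $\EE[p_n]\sim cn/\log n$ you quote is for the \emph{range}, not for the (strictly smaller) set of once-visited sites. As written, then, the proposal has a genuine gap: the exponential lower-tail bound for once-visited sites is neither proved nor reducible to the paper's tools, and without it the integral bound does not follow.
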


\subsection{Proof of Proposition
\ref{lem:equivalent}.}
Remember that $V_n=\sum_{z\in{\mathbb Z}^d}N_n^\beta(z)$.
We start by a preliminary lemma.
\begin{lem}\label{lem:borne}
\begin{enumerate}
\item If $\beta > 1$, 
$\sup_n {\mathbb E}\left[\pare{ \frac{n \log(n)^{\beta -1} }{V_n}}^{1/(\beta-1)}
      \right]<+\infty$.
\item If $\beta \le  1$, $\forall p \in \NN$, 
$\sup_n {\mathbb E}\left[\pare{ \frac{n \log(n)^{\beta -1} }{V_n}}^{p}
      \right]<+\infty$.
\end{enumerate}
\end{lem}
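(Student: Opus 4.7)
For item \textbf{(1)}, $\beta>1$: I would apply Jensen's inequality to the convex function $x\mapsto x^\beta$ under the uniform probability on the range $\{z: N_n(z)\geq 1\}$, which gives
\[
\frac{V_n}{R_n} \;=\; \frac{1}{R_n}\sum_{z} N_n(z)^\beta \;\geq\; \left(\frac{1}{R_n}\sum_{z} N_n(z)\right)^{\beta} \;=\; \left(\frac{n}{R_n}\right)^{\beta},
\]
so $V_n \geq n^\beta R_n^{1-\beta}$, equivalently
\[
\left(\frac{n\log(n)^{\beta-1}}{V_n}\right)^{1/(\beta-1)} \leq \frac{R_n\log(n)}{n}.
\]
Taking expectations and using $\EE[R_n]\sim cn/\log(n)$ (see \cite{LGR}) then finishes (1).

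For item \textbf{(2)}, $\beta\leq 1$: the case $\beta=1$ is immediate since $V_n=n$; the interesting range is $\beta<1$, where the Jensen inequality above \emph{reverses} by concavity (giving only the upper bound $V_n\leq n^\beta R_n^{1-\beta}$), so a probabilistic argument is needed. My plan is to isolate a good event $G_n$ with super-polynomially small complement ($\PP(G_n^c) = O(n^{-K})$ for every $K>0$) on which $V_n \geq c\, n\log(n)^{\beta-1}$, and to absorb the complement via the deterministic subadditive bound $V_n \geq n^\beta$ (from $(a+b)^\beta \leq a^\beta+b^\beta$, valid for $\beta\in(0,1]$). This leads to
\[
\EE\!\left[\left(\frac{n\log(n)^{\beta-1}}{V_n}\right)^p\right] \leq c^{-p} + n^{p(1-\beta)}\log(n)^{p(\beta-1)}\,\PP(G_n^c),
\]
and the super-polynomial decay of $\PP(G_n^c)$ makes the right-hand side bounded in $n$ for every $p\in\NN$. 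A careful rereading of the proof of Lemma \ref{lem:omega_n} shows that the tail estimates for $\{R_n\geq n/(\log\log(n))^{1/4}\}$ and for $\{N_n^*\geq n^\gamma\}$ are in fact super-polynomial, so part of the construction of $G_n$ is essentially free.

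The \textbf{main obstacle} is producing the lower bound $V_n \geq c\,n\log(n)^{\beta-1}$ on such a good event. Neither of the easy deterministic bounds is sharp enough: $V_n\geq n^\beta$ is far too small, and $V_n \geq n(N_n^*)^{\beta-1}$ combined with Erd\H{o}s--Taylor-type asymptotics $N_n^* \sim c\log(n)^2$ in $d=2$ yields only $V_n \gtrsim n\log(n)^{2(\beta-1)}$. Even on $\Omega_n$, the constraints $R_n\leq n/(\log\log(n))^{1/4}$ and $\sum_zN_n(z)=n$ are not enough, since the extremal configuration (one site carrying almost all the mass and the others carrying $1$) makes $V_n$ of order $n^\beta+R_n$. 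The natural route is therefore a finer level-set analysis: show that, with super-polynomially high probability, the number of sites $z$ with $N_n(z)$ of order $\log(n)$ is itself of order $n/\log(n)$, which then forces $V_n\geq c\,n\log(n)^{\beta-1}$. This should be tractable via second-moment estimates on $\#\{z: c\log(n)\leq N_n(z)\leq C\log(n)\}$ in the spirit of \cite{Cerny}, together with the exponential return-time bounds already used in the proof of Lemma \ref{lem:omega_n}.
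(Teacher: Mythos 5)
Your treatment of part (1) is correct and essentially identical to the paper's: your Jensen inequality with $x\mapsto x^\beta$ under uniform measure on the range is equivalent to the paper's H\"older inequality with exponent $\beta$, and both reduce the problem to $\EE[R_n]=O(n/\log n)$.

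Part (2) has a genuine gap. Your plan hinges on producing a good event $G_n$ with $\PP(G_n^c)=O(n^{-K})$ for all $K$ on which $V_n\geq c\,n\log(n)^{\beta-1}$, and you yourself flag this as the main obstacle and then defer it to an unproved second-moment/level-set analysis "in the spirit of \cite{Cerny}." That deferred step is the entire content of the lemma when $\beta<1$: a super-polynomial lower-tail estimate for $V_n$ at the correct scale is \emph{not} contained in the paper's Lemma~\ref{lem:omega_n} (which only bounds $R_n$ and $N_n^*$, giving $V_n\geq n^{1-\gamma(1-\beta)}$ on $\Omega_n$, far short of $n\log(n)^{\beta-1}$), nor does it follow from the almost-sure convergence in Lemma~\ref{tech1}, which gives no tail rate. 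So as written, your argument does not close.

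The paper's route is quite different and avoids any lower-deviation estimate for $V_n$ entirely. It applies H\"older with exponent $2-\beta$ to get the pointwise bound
\[
\frac{n\log(n)^{\beta-1}}{V_n} \;\le\; \pare{\frac{\sum_x N_n^2(x)}{n\log n}}^{1-\beta},
\]
and then reduces (2) to the \emph{exponential} moment bound $\sup_n\EE\!\left[\exp\!\left(c\,\frac{\sum_x N_n^2(x)}{n\log n}\right)\right]<\infty$. This last bound is obtained by writing $\sum_x N_n^2(x)=\sum_{k=0}^{n-1}N_n(S_k)$, using Jensen to pass to $\EE[\exp(cN_n(S_k)/\log n)]$, and then the distributional identity $N_n(S_k)\stackrel{(d)}{=}N_{k+1}(0)+N'_{n-k}(0)-1$ together with the geometric tail $\PP(N_n(0)\ge t\log n)\le \PP(T_0\le n)^{\lceil t\log n\rceil}$ and $\PP(T_0>n)\sim C/\log n$. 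This converts the problem into an upper-tail estimate for the self-intersection local time, which is elementary, rather than a lower-tail estimate for $V_n$, which is not. You should either supply a complete proof of the super-polynomial lower-deviation bound you invoke, or switch to the H\"older-plus-exponential-moment strategy.
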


\begin{proof}
For $\beta > 1$, using H\"older's inequality with $p = \beta$, we get 
$$n = \sum_x N_n(x) \le V_n^{\frac 1 \beta} R_n^{\frac {\beta-1} \beta}$$
which means that
$$
 \pare{\frac{n \log(n)^{\beta -1} }{V_n}}^{1/(\beta-1)} \le \frac{\log(n) R_n}{n}.
$$
But it is proved in \cite{LGR} Equation (7.a) that $\EE[R_n]=\O( n/ \log(n))$. The result follows.

The result is obvious for $\beta =1$. 
For $\beta < 1$, H\"older's inequality with $p = 2-\beta$ yields
\[ 
n = \sum_x N_n^{\frac{\beta}{2-\beta}}(x) N_n^{\frac{2(1-\beta)}{2-\beta}}(x)
\le V_n^{\frac 1 {2-\beta}} \pare{\sum_x N_n^2(x)}^{\frac{1-\beta}{2-\beta}}
\]
and so
\[ \frac{n \log(n)^{\beta-1}}{V_n} \le
\pare{\frac{\sum_x N_n^2(x)}{n \log(n)}}^{1-\beta} \, .
\]
It is therefore enough to prove that there exists $c > 0$ such that 
\begin{equation}
\label{mom-exp}
\sup_n \EE\cro{\exp\pare{c \frac{\sum_x N_n^2(x)}{n \log(n)}}} < \infty
.
\end{equation}
Note that $\sum_x N_n^2(x) = \sum_{k=0}^{n-1} N_n(S_k)$. By Jensen's
inequality, we get thus
\[
\EE\cro{\exp\pare{c \frac{\sum_x N_n^2(x)}{n \log(n)}}}
\le \frac{1}{n} \sum_{k=0}^{n-1} \EE \cro{\exp\pare{c \frac{N_n(S_k)}{\log(n)}}}
\, .
\]
Observe now that $N_n(S_k)= \sum_{j=0}^k {\bf 1}_{\{S_k-S_j=0\}} 
+  \sum_{j=k+1}^{n-1} {\bf 1}_{\{S_j-S_k=0\}} \stackrel{(d)}{=} 
N_{k+1}(0) + N'_{n-k}(0) - 1$, where $(N'_n(x), n \in \NN, x\in \ZZ^d)$ is 
an independent copy of $(N_n(x), n \in \NN, x\in \ZZ^d)$. Hence, 
\[\EE\cro{\exp\pare{c \frac{\sum_x N_n^2(x)}{n \log(n)}}} 
\le \EE \cro{\exp\pare{c\frac{N_n(0)}{\log(n)}}}^2 \, .
\]
But, $\forall t > 0$, 
\[ \PP \pare{ N_n(0) \ge t \log(n)} \le \PP\pare{T_0 \le n}^
{\ceil{t \log(n)}}
\, ,
\]
and 
\[ \EE \cro{\exp\pare{c\frac{N_n(0)}{\log(n)}}} 
\le 1 + \int_0^{\infty} c \exp(ct) \exp\pare{-\ceil{t \log(n)} \PP(T_0 > n)} \, dt
\, .
\] 
Now \refeq{mom-exp} follows then from the fact that $\exists C > 0$ such that 
$\PP(T_0 > n) \sim C/\log(n)$ for any integer $n \ge 1$. 
\end{proof}

\noindent The next step is 
\begin{lem}\label{lem:gaussian}
Under the hypotheses of Proposition \ref{lem:equivalent}, we have
\begin{eqnarray*}
 \int_{\{|t| \le  n^{\delta}/b_n\}} 
e^{-it\floor{b_n x}} \EE \cro{\left\{\prod_y \varphi_\xi(tN_n(y))
-e^{-|t|^\beta(A_1+iA_2 sgn(t))  V_n }
\right\}
   {\bf 1}_{\Omega_n}} \, dt
= o( b_n^{-1}) \, ,
\end{eqnarray*} 
uniformly in $x\in{\mathbb R}$. 
\end{lem}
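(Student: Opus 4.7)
The plan is to bound the integrand in absolute value — this kills the oscillatory factor $e^{-it\lfloor b_n x\rfloor}$ and immediately gives uniformity in $x$. Writing $w_y := \exp(-|tN_n(y)|^\beta(A_1+iA_2\,\text{sgn}(t)))$ (which equals $1=\varphi_\xi(0)$ when $N_n(y)=0$, so only sites in the range matter), one reduces the lemma to showing
$$b_n \int_{|t|\le n^\delta/b_n} \EE\!\left[\left|\prod_y \varphi_\xi(tN_n(y)) - \prod_y w_y\right|{\bf 1}_{\Omega_n}\right] dt \to 0.$$

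The first step is a pointwise estimate for fixed $t$. On $\Omega_n$ with $|t|\le n^\delta/b_n$ we have $|t|N_n^*\le n^{\delta+\gamma}/b_n\to 0$ (the hypothesis $\delta+\gamma<1/\beta$ is used here), so by \refeq{majorationphi}, $|\varphi_\xi(tN_n(y))|$ and $|w_y|$ are both bounded by $e^{-\sigma|tN_n(y)|^\beta}$, while $|\varphi_\xi(tN_n(y))-w_y|\le |tN_n(y)|^\beta h(|tN_n(y)|)$ with $h$ continuous, nondecreasing and vanishing at $0$. The crux is then the telescoping inequality
$$\left|\prod_y z_y - \prod_y w_y\right| \le \sum_y |z_y-w_y|\prod_{y'\ne y}\max(|z_{y'}|,|w_{y'}|),$$
applied with $\max(|z_{y'}|,|w_{y'}|)\le e^{-\sigma|tN_n(y')|^\beta}$. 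Using $e^{\sigma|tN_n(y)|^\beta}\le 2$ and monotonicity of $h$ to pull out $h(|tN_n(y)|)\le h(|t|N_n^*)$, this yields
$$\left|\prod_y\varphi_\xi(tN_n(y))-\prod_y w_y\right| \le 2\,e^{-\sigma|t|^\beta V_n}\,h(|t|N_n^*)\,|t|^\beta V_n,$$
with $h(|t|N_n^*)\le h(n^{\delta+\gamma}/b_n) =: \eta_n\to 0$ on $\Omega_n$.

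The second step integrates this bound. Enlarging the domain to $t\in\RR$ and substituting $s=tV_n^{1/\beta}$,
$$\int_\RR e^{-\sigma|t|^\beta V_n}\,|t|^\beta V_n\,dt \;=\; V_n^{-1/\beta}\int_\RR e^{-\sigma|s|^\beta}|s|^\beta\,ds \;\le\; C V_n^{-1/\beta}.$$
The quantity to estimate is thus at most
$$C\eta_n\, b_n\,\EE[V_n^{-1/\beta}{\bf 1}_{\Omega_n}] \;=\; C\eta_n\,\EE\!\left[\left(\frac{n\log(n)^{\beta-1}}{V_n}\right)^{1/\beta}{\bf 1}_{\Omega_n}\right],$$
and the last expectation is uniformly bounded by Lemma~\ref{lem:borne}: for $\beta>1$, Jensen's inequality reduces the $1/\beta$-moment to the $1/(\beta-1)$-moment (valid since $1/\beta\le 1/(\beta-1)$); for $\beta=1$ the integrand equals $1$; for $\beta<1$, Jensen bounds the $1/\beta$-moment by a sufficiently large integer moment given by part (2). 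Since $\eta_n\to 0$, the whole expression is $o(1)$, giving the $o(b_n^{-1})$ bound.

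I expect the principal difficulty to be selecting the correct product inequality in the pointwise step. The naive bound $|\prod z_y-\prod w_y|\le \sum|z_y-w_y|$ leaves a bare factor of $V_n$ (typically of order $n\log(n)^{\beta-1}$) with nothing to absorb it. Preserving the full exponential damping $e^{-\sigma(|t|^\beta V_n - |tN_n(y)|^\beta)}$ in the telescoping is what allows the rescaling $s=tV_n^{1/\beta}$ to convert the troublesome $V_n$ into the integrable $V_n^{-1/\beta}$, which is precisely what Lemma~\ref{lem:borne} controls in $L^1$.
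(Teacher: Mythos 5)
Your proof is correct and follows essentially the same route as the paper's: a telescoping decomposition of the difference of products on $\Omega_n$, the pointwise bounds from \eqref{majorationphi} and \eqref{phi0}, the substitution $s=tV_n^{1/\beta}$ to produce $V_n^{-1/\beta}$, and Lemma \ref{lem:borne} to control $b_n\EE[V_n^{-1/\beta}]$. The only cosmetic difference is that you absorb the single factor $e^{\sigma|tN_n(y)|^\beta}\le 2$ directly via $N_n^*\le n^\gamma$, whereas the paper phrases the same step as $\sum_{z\ne y}N_n^\beta(z)\ge V_n/2$ using \eqref{minVn}; both are fine.
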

\begin{proof}
It suffices to prove that 
\[
 \int_{\{|t| \le n^{\delta}/b_n\}}{\mathbb E}[|E_n(t)| {\bf 1}_{\Omega_n}]
 \, dt = o(b_n^{-1})
 \] 
with
$$E_n(t):=\prod_y\varphi_\xi(tN_n(y))-
\prod_y \exp\left(-|t|^\beta N_n^\beta (y) (A_1+iA_2 sgn(t)) \right). $$
Observe that 
\begin{eqnarray*}
E_n(t) = \sum_y & & \left(\prod_{z<y}\varphi_\xi(tN_n(z))\right)
\left(\varphi_\xi(tN_n(y))- e^{-|t|^\beta N_n^\beta (y)(A_1+iA_2 sgn(t))  }\right)\\ 
         & \times & \left(\prod_{z>y}e^{- |t|^\beta N_n^\beta (z) (A_1+iA_2 sgn(t))}\right)\, , 
\end{eqnarray*} 
where an arbitrary ordering of sites of $\ZZ^d$ has been chosen. 
But on $\Omega_n$, if $|t|\le n^\delta b_n^{-1}$, then
\begin{eqnarray}
\label{tnnz}
|t| N_n(z)  \le  n^{\gamma+\delta} b_n^{-1}.
\end{eqnarray} 
Since $\gamma+\delta<\beta^{-1}$, 
this implies in particular that $|t| N_n(z)< \varepsilon_0$ for $n$ large enough. 
Thus, by using \eqref{majorationphi}, we get
\begin{eqnarray*} 
|E_n(t)| \le  \sum_y\left\vert \varphi_\xi(tN_n(y))-
   \exp\left(- |t|^\beta N_n^\beta(y) (A_1+iA_2 sgn(t))  \right)\right\vert
\exp\left(-\sigma |t|^\beta\sum_{z\ne y}N_n^\beta(z) \right),
\end{eqnarray*}
for $n$ large enough.  
Observe next that \eqref{phi0} implies 
$$\left\vert \varphi_\xi(u)-\exp\left(- |u|^\beta (A_1+iA_2 sgn(u))  \right)\right\vert\le
|u|^\beta h(\vert u\vert) \quad \textrm{for all } u\in \RR,$$ 
with $h$ a continuous and monotone function on $[0,+\infty)$ vanishing in $0$. 
Therefore by using \eqref{tnnz} we get
\begin{eqnarray*}
|E_n(t)|\le   |t|^\beta h( n^{\gamma+\delta} b_n^{-1}) 
 \sum_y N_n^\beta(y) \exp\left(-\sigma|t|^\beta \sum_{z\ne
y}N_n^\beta(z)\right).
\end{eqnarray*}
Now, according to \refeq{minVn} and since $\gamma<\frac{1}{\beta+1} 
\le \frac{1}{\beta + (1-\beta)_+}$,
if $n$ is large enough, we have on $\Omega_n$ 
$$ \sum_{z\ne y}N_n^\beta(z)
\ge V_n/2\qquad \text{for all }y\in \ZZ.$$
By using this and the change of variables
$v=tV_n^{1/\beta}$, we get 
$$
\int_{\{|t| \le n^\delta b_n^{-1} \}}\mathbb{E}\left[\vert E_n(t)\vert
{\bf 1}_{\Omega_n}\right]\, dt\leq h( n^{\gamma+\delta} b_n^{-1})
\mathbb{E}[V_n^{-1/\beta}]
\int_{\mathbb{R}} \vert v\vert^\beta \exp\left(-\sigma \vert v\vert^\beta /2\right)\, dv
=o(\mathbb{E}[V_n^{-1/\beta}]),
$$
which proves the result according to Lemma \ref{lem:borne}.
\end{proof}
 
\noindent Finally Proposition \ref{lem:equivalent} follows from the

\begin{lem}\label{le13} Under the hypotheses
of Proposition \ref{lem:equivalent}, we have
\[ \frac{d_0}{2 \pi} \int_{\{|t| \le n^{\delta}b_n^{-1}\}} e^{-it \floor{b_n x}}
\EE \cro{ e^{- |t|^\beta V_n (A_1+iA_2 sgn(t)) }{\bf 1}_{\Omega_n}} \, dt
=d_0 \frac{C(x)}{b_n}+ o( b_n^{-1}) \, ,
\]
uniformly in $x\in\mathbb R$. 
\end{lem}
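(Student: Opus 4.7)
The plan is to rescale by $t=s/b_n$, identify the limit via Fourier inversion of the density $C$ of $\tilde Y_1$, and pass to the limit by exploiting the almost-sure asymptotic $V_n/b_n^\beta\to K$ supplied by Lemma \ref{tech1}. Setting $K:=\Gamma(\beta+1)/(\pi A)^{\beta-1}$ and $\psi(s):=\exp\pare{-K\va{s}^\beta(A_1+iA_2\text{sgn}(s))}$, one recognizes $\psi$ as the characteristic function of $\tilde Y_1$. Since $\va{\psi(s)}=e^{-KA_1\va{s}^\beta}$ is integrable and $\tilde Y_1$ admits a smooth density, Fourier inversion gives $2\pi C(x)=\int_{\RR}e^{-isx}\psi(s)\,ds$. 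After substituting $s=tb_n$, the lemma reduces to proving, uniformly in $x\in\RR$,
\[I_n(x):=\int_{\acc{\va{s}\le n^\delta}}e^{-is\floor{b_nx}/b_n}\,\EE\cro{e^{-\va{s}^\beta K_n(A_1+iA_2\text{sgn}(s))}\mathbf{1}_{\Omega_n}}\,ds\ \longrightarrow\ 2\pi C(x),\]
with $K_n:=V_n/b_n^\beta$.

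The difference $I_n(x)-\int_{\RR}e^{-isx}\psi(s)\,ds$ is split into three pieces: the tail over $\va{s}>n^\delta$, an oscillation error arising from replacing $e^{-is\floor{b_nx}/b_n}$ by $e^{-isx}$ on $\va{s}\le n^\delta$, and a main piece comparing $\EE\cro{e^{-\va{s}^\beta K_n(\cdots)}\mathbf{1}_{\Omega_n}}$ with $\psi(s)$. The tail is $o(1)$ uniformly in $x$ by integrability of $\psi$. The oscillation error is $O(1/b_n)$ uniformly in $x$, using $\va{\floor{b_nx}/b_n-x}\le 1/b_n$ together with the integrability of $\va{s}\va{\psi(s)}$. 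In the main piece, the integrand in $s$ no longer depends on $x$ once absolute values are taken, so uniformity becomes automatic. Writing
\[\EE\cro{e^{-\va{s}^\beta K_n(\cdots)}\mathbf{1}_{\Omega_n}}-\psi(s)=\EE\cro{(e^{-\va{s}^\beta K_n(\cdots)}-\psi(s))\mathbf{1}_{\Omega_n}}-\psi(s)\PP(\Omega_n^c),\]
the second summand contributes at most $\PP(\Omega_n^c)\int\va{\psi}\,ds=o(1)$ by Lemma \ref{lem:omega_n}.

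The first summand is bounded via Fubini by $\EE\cro{\mathbf{1}_{\Omega_n}J_n}$, where $J_n:=\int_{\acc{\va{s}\le n^\delta}}\va{e^{-\va{s}^\beta K_n(A_1+iA_2\text{sgn}(s))}-\psi(s)}\,ds$. Lemma \ref{tech1} applied with $m=1$, $\theta_1=1$, $t_1=1$, $\gamma=\beta$ gives $K_n\to K$ $\PP$-almost surely, so for $\PP$-a.e.\ $\omega$ the integrand converges pointwise to zero in $s$ and is eventually dominated by $e^{-KA_1\va{s}^\beta/2}+e^{-KA_1\va{s}^\beta}$; dominated convergence in $s$ then yields $J_n(\omega)\to 0$. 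The change of variable $u=sK_n^{1/\beta}$ produces the deterministic bound $J_n\le c(K_n^{-1/\beta}+K^{-1/\beta})$ with $c=\int_{\RR}e^{-A_1\va{u}^\beta}\,du$, so to apply Vitali's theorem it remains to verify uniform integrability of $K_n^{-1/\beta}$. For $\beta>1$, Lemma \ref{lem:borne} furnishes $\sup_n\EE\cro{K_n^{-1/(\beta-1)}}<\infty$, which amounts to an $L^{\beta/(\beta-1)}$ bound on $K_n^{-1/\beta}$ with exponent strictly greater than one; for $\beta\le 1$, it gives $L^p$ bounds for every $p$. In either regime de la Vall\'ee-Poussin's criterion yields the required uniform integrability, and Vitali's theorem concludes $\EE\cro{\mathbf{1}_{\Omega_n}J_n}\to 0$.

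The hard part is precisely this last passage: a naive $\omega$-wise dominated convergence argument would require an $L^1$ dominating function for $K_n^{-1/\beta}$, which is unavailable because the deterministic lower bound on $V_n$ from Lemma \ref{lem:omega_n} degrades with $n$. Lemma \ref{lem:borne} is tailored precisely to circumvent this difficulty by supplying the right moment bounds in each regime of $\beta$.
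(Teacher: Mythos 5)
Your proof is correct, and it reaches the same conclusion from the same two key lemmas (Lemma \ref{tech1} for the almost-sure convergence of $K_n=V_n/b_n^\beta$ to $K$, and Lemma \ref{lem:borne} for the uniform integrability of $K_n^{-1/\beta}$), but it organizes the argument differently from the paper. The paper performs the Fourier inversion inside the expectation, writing the integral over $\{|v|\le n^\delta\}$ as $\int_\RR - \int_{\{|v|>n^\delta\}}$, so that the full integral over $\RR$ collapses to $b_n V_n^{-1/\beta}f(xb_nV_n^{-1/\beta})$ (with $f$ the density of $Y(1)$); it then proves $\EE\cro{W_n f(xW_n)}\to Wf(xW)=C(x)$ uniformly in $x$ via an $\mathbb L^1$-convergence argument for $W_n=b_nV_n^{-1/\beta}$ and a Lipschitz bound $\sup_{x,z}|(g_x)'(z)|\le\sup_u|f(u)+uf'(u)|$, and controls the remaining random tail $J_{n,x}=\int_{\{|v|\ge n^\delta\}}e^{-ivx}\phi(vb_n^{-1}V_n^{1/\beta})\,dv$ by using the deterministic lower bound $V_n\ge n^{1-\gamma(1-\beta)_+}$ from Lemma \ref{lem:omega_n} and the constraint $\delta>\gamma(1-\beta)_+/\beta$. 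You instead compare the integrand directly with the deterministic characteristic function $\psi$ of $\tilde Y_1$, so that the tail piece is purely deterministic ($\int_{\{|s|>n^\delta\}}|\psi|\to 0$) and all the randomness in $K_n$ is absorbed into the ``main piece,'' which you control by Fubini, almost-sure convergence, and Vitali's theorem; uniformity in $x$ is then trivial because the modulus of the oscillating factor is one. This route is a genuine alternative: it sidesteps the pointwise lower bound on $V_n$ entirely (the quantity $K_n^{-1/\beta}$ is handled only through its moments), at the cost of invoking Vitali rather than a direct $\mathbb L^1$ estimate. Both proofs are valid, and the moment-regime split for $\beta>1$ versus $\beta\le 1$ that you describe at the end matches exactly the content of Lemma \ref{lem:borne} as the paper uses it.
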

\begin{proof}
Set 
$$I_{n,x}:=\int_{\{|t| \le n^\delta b_n^{-1}\}} e^{-it \floor{b_n x}}
 e^{- |t|^\beta V_n (A_1+iA_2 sgn(t))} \, dt,$$
which can be rewritten
$$I_{n,x}=\int_{\{|t| \le n^\delta b_n^{-1}\}} e^{-it \floor{b_n x}}
 \phi( t V_n^{1/\beta} ) \, dt.$$
Since $|\floor{b_n x}-b_n x|\le 1$, for all $n$ and $x$, it is immediate that 
$$I_{n,x}= \int_{\{|t| \le n^\delta b_n^{-1}\}} e^{-it b_n x}
    \phi( t V_n^{1/\beta} ) \, dt + \O(n^{2\delta}b_n^{-2}).$$
But $\delta<(2\beta)^{-1}$ by hypothesis. So actually
$$I_{n,x}= \int_{\{|t| \le n^\delta b_n^{-1}\}} e^{-it b_n x}
    \phi( t V_n^{1/\beta} ) \, dt + o( b_n^{-1}).$$
Next, with the change of variable $v=t b_n$, we get: 
\begin{eqnarray}
\label{Jn}
  \int_{\{|t| \le n^\delta b_n^{-1}\}} e^{-it b_n x}
    \phi( t V_n^{1/\beta} ) \, dt =b_n^{-1}\left\{ V_n^{-1/\beta}
    b_n f(x V_n^{-1/\beta}b_n) - J_{n,x}\right\},
\end{eqnarray}
where $f$ is the density function of the distribution with characteristic function $\phi$
and where
$$ J_{n,x}:=  \int_{\{|v|\ge n^{\delta}\} }\!\! 
            e^{-iv x} \phi( v b_n^{-1} V_n^{1/\beta}) \, dv.
$$
By lemma \ref{tech1} (applied with $m = 1$, $t_1=\theta_1=1$, 
$\gamma = \beta$), 
 $(W_n:=b_n V_n^{-1/\beta})_n$ 
converges almost surely, as $n\to \infty$, to 
the constant $\Gamma(\beta+1)^{-1/\beta} (\pi A)^{1-1/\beta}$. Moreover, Lemma \ref{lem:borne} 
ensures that the sequence $(W_n,n\ge 1)$ is uniformly integrable, 
so actually the convergence holds in ${\mathbb L}^1$. 
Let us deduce that 
\begin{eqnarray}
\label{WnW} 
{\mathbb E}[g_x(W_n)]={\mathbb E}[g_x(W)]
+o(1),
\end{eqnarray} 
where $g_x:z\mapsto z f(xz)$ and the $o(1)$ is uniform in $x$. First 
\begin{eqnarray*}
\left\vert{\mathbb E}[g_x(W_n)]- {\mathbb E}[g_x(W)] \right\vert
  &\le &   \sup_{x,z\in\mathbb R}\vert (g_x)'(z) \vert{\mathbb E}[\vert W_n- W \vert]\\
  &\le & \sup_u |f(u)+uf'(u)|  {\mathbb E}[\vert W_n- W\vert].
\end{eqnarray*}
This proves \eqref{WnW}. We observe that ${\mathbb E}[g_x(W)]=C(x)$.

\noindent In view of \eqref{Jn}, it only remains to prove that $\EE[J_{n,x}{\bf 1}_{\Omega_n}]=o(1)$ uniformly in $x$. 
But this follows from the basic inequality
$$
\EE[|J_{n,x}{\bf 1}_{\Omega_n}|]\le \int_{|v|\ge n^{\delta}}
  \EE\left[e^{-A_1 |v|^\beta \frac{V_n}{b_n^\beta} }{\bf 1}_{\Omega_n}\right] \, dv,
$$
and from the lower bound for $V_n$ given in \eqref{minVn} and from the 
choice $\delta > \gamma (1-\beta)_+/\beta$. 
\end{proof}

\subsection{Proof of Proposition
\ref{sec:step1}.}

Recall that on $\Omega_n$, $N_n(y) \le n^\gamma$, for all $y \in \ZZ^d$. 
Hence by \eqref{majorationphi},
\[
K_n:=\int_{ n^\delta/b_n }^{\varepsilon_0 n^{-\gamma} }
 {\mathbb E}\left[\prod_y \vert \varphi_\xi(tN_n(y))\vert {\bf 1}_{\Omega_n}\right]
\, dt \leq \int_{ n^\delta/b_n }^{\varepsilon_0 n^{-\gamma} }
{\mathbb E} 
\cro{ \exp \pare{-\sigma t^\beta V_n} {\bf 1}_{\Omega_n}} \, dt \, .
\]
With the change of variable $s=tV_n^{1/\beta}$, we get
\begin{eqnarray*}
K_n &\le& {\mathbb E}\left[V_n^{-1/\beta}\int_{ n^\delta V_n^{1/\beta}b_n^{-1} }
^{\varepsilon_0 n^{-\gamma}V_n^{1/\beta} }
\exp \pare{-\sigma s^\beta }\, ds {\bf 1}_{\Omega_n} \right]
\\
&\le&  \frac{1}{n^{\frac 1 \beta - \gamma \frac{(1-\beta)_+}{\beta}}}
\int_{ n^{\delta - \gamma \frac{(1-\beta)_+}{\beta}} \log(n)^{\frac{1-\beta}{\beta} }}^{+\infty}
\exp \pare{-\sigma s^\beta }\, ds \, ,
\end{eqnarray*}
which proves the proposition since $\delta>\gamma(1-\beta)_+/\beta$.
\subsection{Proof of Proposition
\ref{sec:step3}.}
We adapt the proof of \cite[Proposition 10]{BFFN}.
We will see that the argument of "peaks" still works here.
We endow ${\mathbb Z}^d$ with the ordered structure given
by the relation $<$ defined by
$$(\alpha_1,...,\alpha_d)<(\beta_1,...,\beta_d)\ 
  \leftrightarrow\ \exists i\in\{1,...,d\},\ \ \alpha_i<\beta_i,\
  \forall j<i,\ \alpha_j=\beta_j.$$
\medskip

We consider $\C^+=(x_1,...,x_T)\in({\mathbb Z}^d\setminus\{0\})^T$ for some positive
integer $T$ such that:
\begin{itemize}
\item $x_1+...+x_T=0$;
\item for every $i=1,...,T$, ${\mathbb P}(X_1=x_i)>0$; 
\item there exists $I_1\in\{1,...,T\}$ such that
\begin{itemize}
\item for every $i=1,...,I_1$, $x_i>0$,
\item for every $i=I_1+1,...,T$, $x_i<0$.
\end{itemize}
\end{itemize}
Let us write ${\mathcal C}^-:=(x_{T-i+1})_{i=1,...,T}$. 
We define $B:=\sum_{i=1}^{I_1}x_i$.
We observe that
$$ p:={\mathbb P}((X_1,...,X_T)=\C^+)
  ={\mathbb P}((X_1,...,X_T)=\C^-)>0.$$
We notice that $(X_1,...,X_T)=\C^+$ corresponds to 
a trajectory visiting $B$ only once before going back to the origin
at time $T$ (and without visiting $-B$).
Analogously, $(X_1,...,X_T)=\C^-$ corresponds
to a trajectory that goes down to $-B$ and 
comes back up to $0$ (and without visiting $B$),
and staying at a distance smaller than $\tilde d/2$ of the origin
with $\tilde d:=\sum_{i=1}^T|x_i|$ (where $|\cdot|$ is the absolute value
if $d=1$ and $|(a,b)|=\max(|a|,|b|)$ if $d=2$).
We introduce now the event 
$$\D_n:=\left\{C_n  > \frac{np}{2T}\right\},$$
where
$$C_n:=\#\left\{k=0,...,\left\lfloor \frac n T\right\rfloor-1\ :\ 
(X_{kT+1},\dots,X_{(k+1)T})=\C^\pm\right\}.$$
Since the sequences $(X_{kT+1},\dots,X_{(k+1)T})$, for $k\ge 0$, are independent of each other, 
Chernoff's inequality implies that there exists $c>0$ such that
$${\mathbb P}(\D_n) =1-o(e^{-cn}).$$
\noindent We introduce now the notion of "loop".
We say that there is a loop based on $y$ at time $n$ if $S_n=y$ and 
$(X_{n+1},\dots,X_{n+T})= \C^\pm$.
We will see (in Lemma \ref{sec:p_n} below) that, on $\Omega_n \cap \D_n$, there is a large number of $y\in \ZZ^d$
on which are based a large number of loops.
For any $y\in {\mathbb Z}^d$, let
$$
C_n(y):=\#\left\{k=0,\dots,\left\lfloor \frac n T\right\rfloor-1 \ :\ 
S_{kT}=y \textrm{ and }(X_{kT+1},\dots,X_{(k+1)T}) = \C^\pm\right\},
$$
be the number of loops based on $y$ before time $n$ (and at times which are multiple 
of $T$), and let 
$$p_n :=\#\left\{y\in {\mathbb Z}\ :\
C_n(y)
\ge \frac{\log\log(n)^{1/4}p}{4T}\right \},$$
be the number of sites $y\in \ZZ$ on which at least $a_n:=\left\lfloor
\frac{\log\log(n)^{1/4}p}{4T}\right\rfloor$
loops are based.

\begin{lem}\label{sec:p_n}
On $\Omega_n \cap \D_n$, we have, 
$p_n\ge c'n^{1-\gamma}$ with $c'=p/(4T)$. 
\end{lem}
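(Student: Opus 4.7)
The plan is to decompose the total loop count $C_n = \sum_y C_n(y)$ into contributions from sites with few loops and sites with many loops, and then exploit the two uniform bounds provided by $\Omega_n$ (on $R_n$ and on $N_n^*$) together with the lower bound on $C_n$ provided by $\D_n$.

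First I would observe the pointwise bound $C_n(y) \le N_n(y) \le N_n^* \le n^\gamma$ on $\Omega_n$, which is immediate from the definition of $C_n(y)$: each contribution to $C_n(y)$ requires $S_{kT} = y$, so $C_n(y)$ is at most the number of visits of $S$ to $y$ before time $n$. Next I would split
\[
C_n = \sum_{y:\, C_n(y) < a_n} C_n(y) + \sum_{y:\, C_n(y) \ge a_n} C_n(y),
\]
with $a_n := \lfloor (\log\log n)^{1/4} p/(4T) \rfloor$. The first sum is supported on visited sites, so it is bounded by $a_n R_n$, and on $\Omega_n$ the defining bound $R_n \le n/(\log\log n)^{1/4}$ gives
\[
a_n R_n \le \frac{(\log\log n)^{1/4}\, p}{4T} \cdot \frac{n}{(\log\log n)^{1/4}} = \frac{np}{4T}.
\]

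Combining this with the lower bound $C_n > np/(2T)$ valid on $\D_n$, the second sum satisfies
\[
\sum_{y:\, C_n(y) \ge a_n} C_n(y) \ge \frac{np}{2T} - \frac{np}{4T} = \frac{np}{4T}
\]
on $\Omega_n \cap \D_n$. Finally, since every term in this sum is bounded by $n^\gamma$ (by the first step), the number of summands is at least
\[
p_n \ge \frac{np/(4T)}{n^\gamma} = \frac{p}{4T}\, n^{1-\gamma} = c'\, n^{1-\gamma},
\]
which is the desired inequality. I do not expect any real obstacle here; the only point that requires minor care is keeping track of the two events $\Omega_n$ and $\D_n$ and using each bound only where it is available, which is handled transparently by the above splitting.
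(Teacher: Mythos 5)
Your proposal is correct and follows essentially the same route as the paper: both decompose $C_n=\sum_y C_n(y)$ according to whether $C_n(y)<a_n$ or $C_n(y)\ge a_n$, bound the first piece by $R_n a_n\le np/(4T)$ using the $\Omega_n$-bound on $R_n$, bound the second by $p_n N_n^*\le p_n n^\gamma$ using the $\Omega_n$-bound on $N_n^*$, and then use the $\D_n$-bound $C_n>np/(2T)$ to conclude. The only (cosmetic) difference is that you pass through the intermediate inequality $C_n(y)\le N_n(y)$ before invoking $N_n^*\le n^\gamma$, whereas the paper notes $C_n(y)\le N_n^*$ directly.
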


\begin{proof}
Note that $C_n(y)\le N_n^*$ for all $y\in \ZZ^d$. Thus on $\Omega_n \cap \D_n$, 
we have
\begin{eqnarray*}
\frac{np}{2T} & \le & \sum_{ y \in {\mathbb Z}^d\ :\ C_n(y) < a_n}
C_n(y) + \sum_{ y \in {\mathbb Z}^d\ :\ C_n(y) \ge a_n}
C_n(y) 
\\
& \le & R_n a_n+ N_n^* p_n  \le   \frac {np}{4T} +p_n n^\gamma,
\end{eqnarray*}
according to lemma \ref{lem:omega_n}.
This proves the lemma. 
\end{proof}
We have proved that, if $n$ is large enough, the event $\Omega_n\cap\D_n$
is contained in the event 
$$\E_n:= \{p_n\ge c' n^{1-\gamma} \}.$$
Now, on  $\E_n$, we consider
$(Y_i)_{i=1,\dots,\left\lfloor   c'' n^{1-\gamma} \right\rfloor}$
(with $c'':=c'/(2\tilde d)$ if $d=1$ and with $c'':=c'/2\tilde d^2)$ if $d=2$)
such that
\begin{itemize}
\item on each  $Y_i$, at least $a_n$ loops are based,
\item for every $i,j$ such that $i\ne j$, we have
$\vert Y_i-Y_j\vert> \tilde d/2$.
\end{itemize}
For every $i=1,\dots,\left\lfloor  c'' n^{1-\gamma}  \right\rfloor$, 
let
$t_i^{(1)},\dots,t_i^{ (a_n)}$ be the 
$a_n$
first times (which are multiples of $T$) when a loop is based on the site $Y_i$. 
We also define $N_n^{0}(Y_i+B)$ as the
 number of visits of $S$ before time $n$ to $Y_i+B$, 
which do not occur during the time intervals 
$[t_i^{(j)},t_i^{(j)}+T]$, 
for $j\le a_n$.   

Since our construction is basically the same as in \cite[section 2.8]{BFFN},
the proof of the following lemma is exactly the same as the proof of \cite[Lemma 16]{BFFN}
and we do not prove it again.
\begin{lem} 
\label{independance} 
Conditionally to the event $\E_n$, 
$(N_n(Y_i+B)-N_n^{0}(Y_i+B))_{i\ge 1}$ is a sequence of independent
identically distributed random variables with binomial distribution
${\mathcal B}\left(a_n ;
\frac 12\right)$.
Moreover this sequence is independent of $(N_n^0(Y_i+B))_{i\ge 1}$. 
\end{lem}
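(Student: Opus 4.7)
My plan is to exploit the built-in symmetry between the two loop shapes $\C^+$ and $\C^-$, following the exchangeability argument of \cite[Lemma 16]{BFFN} in our setting. I would introduce, for each pair $(i,j)$ with $1 \le i \le \floor{c'' n^{1-\gamma}}$ and $1 \le j \le a_n$, the Bernoulli variable
\[
\epsilon_i^{(j)} := \ind\{(X_{t_i^{(j)}+1},\ldots,X_{t_i^{(j)}+T}) = \C^+\},
\]
so that on $\E_n$ exactly one of $\epsilon_i^{(j)} = 1$ (loop of type $\C^+$) or $\epsilon_i^{(j)} = 0$ (loop of type $\C^-$) occurs. By inspection of the two patterns, a $\C^+$-loop based at $Y_i$ visits $Y_i + B$ exactly once during the time interval $[t_i^{(j)}, t_i^{(j)} + T]$, while a $\C^-$-loop based at $Y_i$ avoids $Y_i + B$ during this interval; any visit to $Y_i + B$ occurring outside these intervals is, by definition of $N_n^0$, counted in $N_n^0(Y_i + B)$. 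Consequently
\[
N_n(Y_i + B) - N_n^0(Y_i + B) = \sum_{j=1}^{a_n} \epsilon_i^{(j)}.
\]

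The heart of the argument will be an invariance principle. For each pair of indices $(i,j)$, I would consider the involution $\tau_{i,j}$ acting on trajectories $(X_k)_{k \ge 1}$ which swaps $\C^+ \leftrightarrow \C^-$ inside the block of positions $t_i^{(j)}+1,\ldots,t_i^{(j)}+T$ and leaves every other $X_k$ fixed. Since $\PP((X_1,\ldots,X_T) = \C^+) = \PP((X_1,\ldots,X_T) = \C^-) = p$ and the $X_k$'s are i.i.d., $\tau_{i,j}$ preserves the law of the trajectory on the event that the block in question is of type $\C^+$ or $\C^-$. Since both $\C^+$ and $\C^-$ begin and end at $0$, the trajectory outside the affected interval is identical before and after the swap; in particular $\tau_{i,j}$ preserves the event $\E_n$, all the chosen sites $Y_{i'}$, all the loop times $t_{i'}^{(j')}$, all the quantities $N_n^0(Y_{i'} + B)$, and every Bernoulli variable $\epsilon_{i'}^{(j')}$ with $(i',j') \ne (i,j)$, while flipping the single variable $\epsilon_i^{(j)}$.

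Composing these involutions over arbitrary subsets of the index set shows that the joint conditional law of $(\epsilon_i^{(j)})_{i,j}$ given $\E_n$ and given $\sigma(N_n^0(Y_{i'} + B) : i' \ge 1)$ is invariant under all coordinate flips, which forces the $\epsilon_i^{(j)}$ to be i.i.d.\ $\mathcal{B}(1, 1/2)$ variables, independent of this $\sigma$-algebra. Summing over $j \le a_n$ then yields both the $\mathcal{B}(a_n, 1/2)$ distribution and the independence across $i$ and with respect to $(N_n^0(Y_i + B))_{i \ge 1}$. The main technical subtlety to check is that $\tau_{i,j}$ does not alter any other loop time $t_{i'}^{(j')}$: this is where one uses that loops are searched among disjoint $T$-blocks starting at multiples of $T$, so that after the swap the block based at $t_i^{(j)}$ is still a loop block (of the opposite type), and all other loop times remain untouched.
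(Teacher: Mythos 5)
Your overall strategy --- proving exchangeability of the loop types via the coordinate-flip involutions $\tau_{i,j}$ --- is the natural one and matches what the paper intends (the paper gives no proof of its own, deferring entirely to \cite[Lemma 16]{BFFN}). The identity $N_n(Y_i+B)-N_n^0(Y_i+B)=\sum_{j\le a_n}\epsilon_i^{(j)}$, and the fact that $\tau_{i,j}$ is a measure-preserving involution fixing the block structure, the event $\E_n$, the selected sites $Y_{i'}$ and the loop times $t_{i'}^{(j')}$, are all correct and you identify the right subtleties there.

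There is, however, a genuine gap in the claim that $\tau_{i,j}$ preserves $N_n^0(Y_{i'}+B)$ for \emph{all} $i'$. Your justification --- ``the trajectory outside the affected interval is identical'' --- only covers $i'=i$, since $N_n^0(Y_i+B)$ by definition discards the interval $[t_i^{(j)},t_i^{(j)}+T]$. For $i'\ne i$, visits occurring during $[t_i^{(j)},t_i^{(j)}+T]$ \emph{are} counted in $N_n^0(Y_{i'}+B)$: the intervals excluded for $Y_{i'}$ are the disjoint blocks $[t_{i'}^{(j')},t_{i'}^{(j')}+T]$, none of which overlaps $[t_i^{(j)},t_i^{(j)}+T]$. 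The swap replaces the inner part of the loop, which occupies $(Y_i,Y_i+B]$ under $\C^+$, by one occupying $[Y_i-B,Y_i)$ under $\C^-$; if $Y_{i'}+B$ lies in one of these ranges, the count $N_n^0(Y_{i'}+B)$ \emph{does} change and the invariance argument collapses. What is missing is an explicit check that loops based at $Y_i$ can never reach $Y_{i'}+B$ when $i'\ne i$; that is exactly the role of the pairwise separation of the $Y_i$'s together with the bounded excursion (of radius $B=\tilde d/2$ about $Y_i$, in $d=1$) of the loops, and you never invoke it. Worse, with the separation $|Y_i-Y_{i'}|>\tilde d/2$ as written in the paper, the exclusion actually fails: taking $Y_i-Y_{i'}\in(\tilde d/2,\tilde d]$ puts $Y_{i'}+B$ inside $[Y_i-B,Y_i)$, which a $\C^-$-loop at $Y_i$ visits. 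You would need a separation of the order $|Y_i-Y_{i'}|>\tilde d$ (which is what the constant $c''=c'/(2\tilde d)$ suggests the authors had in mind) to make the step go through, and this should be stated and used explicitly.
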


\noindent 
Let $\eta$ be a real number such that $\gamma<\eta<(1-\gamma)/\beta$ (this is possible since 
$\gamma<1/(\beta +1)$). We define
$$\forall n\ge 1,\ \ d_n:= n^{-\eta} .$$
Let now 
$\rho:=\sup\{\vert\varphi_\xi(u)\vert\ :\ d\left(u,\frac{2\pi}{d_0}{\mathbb Z}\right)
\ge\varepsilon_0\}$. According to Formula 
\eqref{majorationphi} and since $\lim_{n\rightarrow\infty}d_n=0$, for $n$ large enough,
we have
\begin{eqnarray*} 
\va{\varphi_\xi(u)} 
& \leq & 
\rho {\bf 1}_{\{d\left(u,\frac{2\pi}{d_0}{\mathbb Z}\right) \geq \epsilon_0\}}
+ \exp\left(-\sigma d\left(u,\frac{2\pi}{d_0}{\mathbb Z}\right)^\beta \right)
{\bf 1}_{\{d\left(u,\frac{2\pi}{d_0}{\mathbb Z}\right) < \epsilon_0\}} 
\\
& \leq & \exp \pare{-\sigma d_n^\beta}
\, , 
\end{eqnarray*} 
as soon as $d\left(u,\frac{2\pi}{d_0}{\mathbb Z}\right) \geq d_n$.
Therefore, for $n$ large enough,
\begin{eqnarray}
\label{majodist}
\prod_z \va{\varphi_{\xi}(t N_n(z)) }
\leq \exp \pare{- \sigma d_n^\beta \# \acc{z\ :\ 
d\left(t N_n(z),\frac{2\pi}{d_0}{\mathbb Z}\right) \geq d_n }} \, .
\end{eqnarray}
\noindent Then notice that
\begin{eqnarray}
\label{Gt}
d\left(t N_n(z),\frac{2\pi{\mathbb Z}}{d_0}\right)\ge
d_n\ \Longleftrightarrow\ N_n(z)\in \I:=\bigcup_{k\in{\mathbb Z}}I_k,
\end{eqnarray}
where for all $k\in \ZZ$,
$$I_k:=\left[\frac{2k\pi}{d_0t}+\frac{d_n}
{t},\frac{2(k+1)\pi}{d_0 t}-\frac{d_n}
{t}\right]. $$
In particular ${\mathbb R}\setminus\I=\bigcup_{k\in{\mathbb Z}}J_k$, where for all $k\in \ZZ$,
$$J_k:=\left(\frac{2k\pi}{d_0t}-\frac{d_n}
{t},\frac{2 k\pi}{d_0 t}+\frac{d_n}{t}\right). $$

\begin{lem}\label{sec:liminf}
Under the hypotheses of Proposition \ref{sec:step3},
for every $i\le \left\lfloor c''n ^{1-\gamma} \right\rfloor$, 
$t\in (\varepsilon_0 n^{-\gamma},\pi/d_0)$ and $n$ large enough,
$${\mathbb P}\left(N_n(Y_i+B)\in \I\mid \E_n 
,\ N_n^0(Y_i+B)\right) \ge \frac 13\quad \textrm{almost surely}.$$
\end{lem}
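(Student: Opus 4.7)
The plan is to condition on the event $\E_n$ and on the value of $N_n^0(Y_i+B)$, applying Lemma~\ref{independance} to replace $N_n(Y_i+B)$ by $m+Z$, where $m := N_n^0(Y_i+B)$ is fixed and $Z$ is a binomial $\mathcal{B}(a_n, 1/2)$ random variable, independent of the conditioning. In view of the equivalence \eqref{Gt}, the lemma reduces to showing that, uniformly in $m \in \ZZ$ and $t \in (\varepsilon_0 n^{-\gamma}, \pi/d_0)$, the ``bad'' probability $\PP\pare{d(t(m+Z), (2\pi/d_0)\ZZ) < d_n}$ is at most $2/3$ for $n$ large enough.

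The natural tool is Fourier expansion on the circle $\RR/(2\pi/d_0)\ZZ$: the indicator of the bad set has Fourier coefficients $c_0 = d_0 d_n/\pi$ and $c_k = \sin(kd_0 d_n)/(\pi k)$ for $k \neq 0$. Combined with the explicit binomial characteristic function $\EE[e^{iuZ}] = e^{iu a_n/2}\cos(u/2)^{a_n}$, the bad probability rewrites as
\[
\frac{d_0 d_n}{\pi} + \sum_{k \neq 0} \frac{\sin(kd_0 d_n)}{\pi k}\, e^{ikd_0 t(m+a_n/2)}\,\cos(kd_0 t/2)^{a_n}.
\]
The constant term tends to $0$ as $n \to \infty$ because $d_n = n^{-\eta}$.

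I would then split the range of $t$ into two regimes. When $t$ is bounded away from $\pi/d_0$, the period $2\pi/(d_0 t)$ of the bad set exceeds some $2+c > 2$ and, since the width $2d_n/t$ of each bad interval tends to zero, each period contains at most one integer; the density of bad integers in $\ZZ$ is then strictly less than $1/2$, and the geometric decay of $|\cos(kd_0 t/2)|^{a_n}$ on the non-trivial Fourier modes yields a bound strictly below $1/2$. When $t$ is close to $\pi/d_0$, the bad set is essentially the set of even integers, and I would invoke the key parity identity $\PP(Z \equiv 0 \bmod 2) = \PP(Z \equiv 1 \bmod 2) = 1/2$ (valid for any $a_n \ge 1$, since $Z$ is a sum of i.i.d.\ $\mathrm{Ber}(1/2)$), which yields a bad probability at most $1/2 + o(1)$.

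The main obstacle is the uniformity in $t$ over the whole range $(\varepsilon_0 n^{-\gamma}, \pi/d_0)$: near $t = \pi/d_0$, several Fourier modes $\cos(kd_0 t/2)$ with $k$ even remain close to $\pm 1$, so a naive termwise Fourier bound diverges. The resolution is that in this neighborhood the parity identity applies almost exactly (at $t = \pi/d_0$ one has bad probability exactly $1/2$), while outside this neighborhood the $\cos$-factors decay geometrically in $a_n$ and the Fourier sum converges to a quantity tending to $0$. Combining the two regimes produces the desired uniform bound $\PP(N_n(Y_i+B) \notin \I \mid \E_n, N_n^0(Y_i+B)) \le 2/3$, hence the lemma.
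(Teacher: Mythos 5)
Your approach is genuinely different from the paper's. The paper proves this via Lemma~\ref{sec:lem0}, which compares $\PP(b_n\in I_k-H)$ with $\PP(b_n\in J_k-H)$ (or $J_{k+1}-H$) one period at a time, using only the unimodality of the binomial pmf around $a_n/2$ and the fact that $I_k$ contains many more integers than $J_k$; the two exceptional bad arcs straddling the mode are then killed by a Berry--Esseen estimate. Your plan instead expands the periodic indicator of the bad set into its Fourier series and pairs it with the explicit binomial characteristic function. If it worked it would arguably be more systematic, but as written it has a real gap.

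The gap is in the claim that, for $t$ bounded away from $\pi/d_0$, ``the geometric decay of $|\cos(kd_0 t/2)|^{a_n}$ on the non-trivial Fourier modes yields a bound strictly below $1/2$.'' Write $L=2\pi/(d_0 t)$ for the period. Then $\cos(kd_0t/2)=\cos(k\pi/L)$, which is close to $\pm1$ not only for $k=0$ but for every $k$ near a multiple of $L$; when $L$ is close to an integer $q\ge 3$ there are \emph{infinitely many} such $k$ (all $k=jq$ with $j$ not too large), and for those the factor $|\cos(kd_0t/2)|^{a_n}$ provides no decay. Since the Fourier coefficients $c_k=\sin(kd_0 d_n)/(\pi k)$ decay only like $1/k$, the sum $\sum_k|c_k|\,|\cos(kd_0t/2)|^{a_n}$ is not summable by termwise bounds at (or near) these resonances, so the termwise Fourier estimate does not yield the claimed bound. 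You do single out $t=\pi/d_0$ (the $L\approx 2$ resonance) and handle it correctly via the exact parity identity $\EE[(-1)^Z]=0$, but the same difficulty arises near every $t$ with $2\pi/(d_0 t)$ close to an integer, and your argument does not explain how to control the remaining resonances uniformly in $t\in(\varepsilon_0 n^{-\gamma},\pi/d_0)$. One would need either to exploit cancellation from the oscillating phases $e^{ikd_0t(m+a_n/2)}$, or to replace the sharp indicator by a Fej\'er--type smoothing (an Erd\H{o}s--Tur\'an style argument) so that the Fourier tail becomes absolutely summable; neither is done. The paper's interval-comparison argument sidesteps all of this and is the more robust route here.
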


\noindent 
Assume for a moment that this lemma holds true 
and let us finish  the proof of Proposition \ref{sec:step3}. Lemmas 
\ref{independance} and \ref{sec:liminf} ensure that conditionally to 
$\E_n$ and $((N_n^0(Y_i+B),i\ge 1)$, the events 
$\{N_n(Y_i+B)\in\I\}$, $i\ge 1$, are independent of each other, and all happen
with probability at least $1/3$.
Therefore, since $\Omega_n\cap\D_n\subseteq \E_n$,
there exists $c>0$, such that
$${\mathbb P}\left(\Omega_n \cap \D_n,
\ \#\{i\ :\ N_n(Y_i+B)\in{\mathcal I}\}\le
\frac{ c''n ^{1-\gamma}}{4} \right)
\le {\mathbb P}\left(B_n\le  \frac{ c''n ^{1-\gamma}}{4}\right) 
=o(\exp(-cn^{1-\gamma})),
$$
where for all $n\ge 1$, $B_n$ has binomial distribution
${\mathcal B}\left(\left\lfloor   c''n ^{1-\gamma}   \right\rfloor;
\frac 13\right)$.

\noindent But if $\#\{z\ :\ N_n(z)\in\I\}\ge
\frac{ c''n ^{1-\gamma}}{4}$,
then by \eqref{majodist} and \eqref{Gt} there exists a constant $c>0$, such that
$$\prod_z\vert\varphi_\xi(tN_n(z))\vert
\le \exp\left(-c n^{1-\gamma}d_n^\beta\right),
$$
which proves Proposition \ref{sec:step3} since $1-\gamma-\beta\eta>0$.

\begin{proof}[Proof of Lemma \ref{sec:liminf}]
First notice that by Lemma \ref{independance}, for any $H\ge 0$,
\begin{eqnarray}
\label{H}
{\mathbb P}(N_n(Y_i+B)\in\I \mid \E_n,\ N_n^0(Y_i+B)=H)={\mathbb P}\left(H+b_n\in\I\right),
\end{eqnarray}
where $b_n$ is a random variable with
binomial distribution
${\mathcal B}\left( a_n; \frac 12\right)$.
We will use the following result whose proof is postponed.

\begin{lem}\label{sec:lem0}
Under the hypotheses of Proposition \ref{sec:step3}, for
every $t\in\ ( \varepsilon_0 n^{-\gamma},\pi/d_0)$ and 
for $n$ large enough, the following holds:
\begin{itemize}
\item[(i)] For any integer $k$ such that all the elements of
$I_k-H$ are smaller than $\frac {a_n}2$, 
$${\mathbb P}(b_n\in (I_k-H))\ge
   {\mathbb P}(b_n\in (J_k-H)) . $$
\item[(ii)] For any integer $k$ such that all the elements of
$I_{k}-H$ are larger than $\frac {a_n}2 $, 
$${\mathbb P}(b_n\in (I_k-H))\ge
  {\mathbb P}(b_n\in (J_{k+1}-H)) . $$
\end{itemize}
\end{lem}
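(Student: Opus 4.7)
The plan is to exploit the unimodality of the binomial $\mathcal{B}(a_n,1/2)$: its PMF $k\mapsto\binom{a_n}{k}/2^{a_n}$ is non-decreasing on $\{0,\dots,\lfloor a_n/2\rfloor\}$ and non-increasing on $\{\lceil a_n/2\rceil,\dots,a_n\}$. This is essentially the only structural fact about $b_n$ that I will use.

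First I will estimate the lengths of the two intervals. Since $|J_k|=2d_n/t$, the hypothesis $t\ge\varepsilon_0 n^{-\gamma}$ combined with $d_n=n^{-\eta}$ and $\eta>\gamma$ yields $|J_k|\le 2n^{\gamma-\eta}/\varepsilon_0\to 0$. So for $n$ large each translate $J_k-H$ contains at most one integer. On the other hand $|I_k|=\frac{2\pi}{d_0 t}-|J_k|\ge 2-o(1)$ uniformly for $t\le\pi/d_0$, hence $I_k-H$ contains at least one integer for $n$ large enough.

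For case (i), let $j^{\star}$ denote the unique integer in $J_k-H$; if there is none the inequality is trivial. The intervals $J_k=(\,\cdot\,,c)$ and $I_k=[c,\,\cdot\,]$ share the boundary $c=\frac{2k\pi}{d_0 t}+\frac{d_n}{t}$, with $J_k$ open there and $I_k$ closed, so every integer in $I_k-H$ is strictly larger than $j^{\star}$. Pick any integer $i^{\star}\in I_k-H$; by the hypothesis of (i) we have $j^{\star}<i^{\star}\le a_n/2$, so both lie in the non-decreasing region of the binomial PMF, giving $\PP(b_n=i^{\star})\ge\PP(b_n=j^{\star})$. Summing over all integers in $I_k-H$ yields
\[
\PP(b_n\in I_k-H)\ \ge\ \PP(b_n=i^{\star})\ \ge\ \PP(b_n=j^{\star})\ =\ \PP(b_n\in J_k-H).
\]

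Case (ii) is the mirror argument: $J_{k+1}$ lies immediately to the right of $I_k$, sharing the boundary $\frac{2(k+1)\pi}{d_0 t}-\frac{d_n}{t}$ (now $I_k$ closed, $J_{k+1}$ open), so the at-most-one integer in $J_{k+1}-H$ is strictly larger than every integer in $I_k-H$; combined with the hypothesis of (ii) placing these integers in the non-increasing range of the PMF, the same one-integer comparison closes the case. The only slightly delicate point I anticipate is ensuring strict separation of integers between adjacent $J$ and $I$ intervals, which is taken care of by the open/closed convention in their definitions, so no serious obstacle is expected.
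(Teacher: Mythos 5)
Your proof is correct, and it takes a route that differs from the paper's in one concrete way. Both arguments use the same two structural facts: the unimodality of the $\mathcal{B}(a_n,1/2)$ PMF and the position of $J_k$ just to the left of $I_k$ (resp.\ $J_{k+1}$ just to the right). Where the paper counts integers — bounding $\PP(b_n\in J_k-H)$ above by $\PP(b_n=z_k)\lceil 2d_n/t\rceil$ and $\PP(b_n\in I_k-H)$ below by $\PP(b_n=z_k)\lfloor 2\pi/(d_0t)-2d_n/t\rfloor$, with $z_k$ the largest integer in $J_k-H$, and then showing that the ratio of the two integer counts is at least $1$ — you short-circuit this by first checking that $|J_k|=2d_n/t\le 2n^{\gamma-\eta}/\varepsilon_0<1$ and $|I_k|=2\pi/(d_0t)-2d_n/t>1$ hold uniformly in $t\in(\varepsilon_0 n^{-\gamma},\pi/d_0)$ once $n$ is large, so that $J_k-H$ traps at most one integer while $I_k-H$ traps at least one; comparison of the two single atoms via monotonicity then finishes the job. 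The gain is a shorter computation (no floor/ceiling juggling); the cost is that your argument is tied to the regime where $|J_k|<1$, whereas the paper's counting version would survive without that restriction. One small imprecision, harmless here: you write $j^\star<i^\star\le a_n/2$ and invoke the non-decreasing region; strictly one should note that either $j^\star\ge 0$, so that both atoms lie in $\{0,\dots,\lfloor a_n/2\rfloor\}$, or $j^\star<0$, in which case $\PP(b_n=j^\star)=0$ and the inequality is trivial — but this is exactly the same caveat the paper also handles by assuming $J_k-H$ meets $\NN$.
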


\noindent Now call $k_0$ the largest integer satisfying
the condition appearing in (i) and $k_1$ the smallest integer satisfying
the condition appearing in (ii).
We have $k_1=k_0+1$ or $k_1=k_0+2$. According to Lemma \ref{sec:lem0},
we have
\begin{eqnarray*}
{\mathbb P}\left(H+b_n\in\I\right)
& \ge & \sum_{k \le k_0} \PP\pare{H+b_n\in I_k} + 
		\sum_{k \ge k_1} \PP\pare{H+b_n \in I_k}
\\
&\ge &  \sum_{k \le k_0} \PP\pare{H+b_n \in J_k} 
	+  \sum_{k \ge k_1} \PP\pare{H+b_n \in J_{k+1}} 
\\
& = &   {\mathbb P}(H+b_n\not\in\I)-{\mathbb P}
(H+b_n\in J_{k_0+1}\cup J_{k_1}). 
\end{eqnarray*}
Hence, 
\[ \PP \pare{H+b_n \in \I} 
\geq \frac 1 2 \cro{ 1 - \PP(H+b_n\in J_{k_0+1}\cup J_{k_1})} \, .
\] 
Let $\bar{b}_n := {2} \pare{b_n - \frac{a_n}2}
{\sqrt{a_n}}$.
Since $\lim_{n\rightarrow +\infty}a_n=+\infty$, $(\bar{b}_n)_n$ converges
in distribution to a standard normal variable, whose distribution function
is denoted by $\Phi$. The interval $J_{k_1}$ being 
of length $2d_n/t$, 
\begin{eqnarray*} 
\PP(H+b_n \in J_{k_1}) & = & \PP(\bar{b}_n \in [m_n,M_n])
\, , \mbox{ with } M_n-m_n = 4\frac {d_n}{t\sqrt{a_n}} 
\\
& \leq & \Phi(M_n) - \Phi(m_n) + 
\frac{C}{\sqrt{a_n}}
\,  \mbox{ (by the Berry--Esseen inequality) }
\\
& \leq & \frac {M_n-m_n}{\sqrt{2\pi}} 
+ \frac{C}{\sqrt{a_n}} 
\\
& \leq & C' \frac{d_n}{\varepsilon_0 n^{-\gamma} \sqrt{a_n}} 
+ \frac{C}{\sqrt{a_n}} 
\, , 
\end{eqnarray*} 
for $t \ge \varepsilon_0 n^{-\gamma}$, and some constants $C>0$ and $C'>0$. 
Since $\lim_{n\rightarrow +\infty}a_n=+\infty$ and $\lim_{n\rightarrow +\infty}
d_n n^{\gamma}(a_n)^{-1/2}=0$ (since $\eta>\gamma$), 
we conclude that $\PP(H+b_n \in J_{k_1}) = o(1)$. The same holds for  
$\PP(H+b_n \in J_{k_0+1})$, so that for $n$ large enough, 
\[ \PP \pare{H+b_n \in \I} 
\geq \frac 1 2 \cro{ 1 - o(1)} \geq \frac 1 3 \, .
\]
Together with \eqref{H}, this concludes the proof of Lemma \ref{sec:liminf}. 
\end{proof}

\begin{proof}[Proof of Lemma \ref{sec:lem0}]
We only prove (i), since (ii) is similar.
So let $k$ be an integer such that all the elements of
$I_{k}-H$ are smaller than 
$\frac {a_n}2 $.
Assume that $(J_k-H)\cap \mathbb Z$ contains at least one nonnegative integer
(otherwise ${\mathbb P}(b_n\in (J_k-H))=0$  and there is nothing to prove).
Let $z_k$ denote the greatest integer in $J_k - H$, so that by our
assumption $\PP(b_n = z_k) > 0$ (remind that $0 \le z_k 
<\frac {a_n}2$). By monotonicity of the function
$z \mapsto  \PP(b_n=z)$, for $z\le \frac {a_n}2$, we get 
\[ 
\PP (b_n \in J_k - H) \leq \PP(b_n = z_k) \#((J_k -H)\cap \ZZ)
\leq \PP(b_n = z_k)  \ceil{ \frac{2d_n}t}
\, .
\]
In the same way,  
\[
\PP (b_n \in I_k - H) \geq  \PP(b_n = z_k) \#((I_k -H)\cap \ZZ)
\geq  \PP(b_n = z_k)  \floor{\frac {2\pi}{d_0 t} -  
\frac{2d_n}{t}}  
\, .
\]
Hence 
$$
\PP (b_n \in I_k - H) \geq \frac{ \floor {\frac{2\pi}{ d_0 t}-\frac{2 d_n}t} }   
{\ceil{\frac{ 2d_n}{t}}} 
\PP (b_n \in J_k - H)
\, .
$$
But $\pi/(d_0t)\ge 1$ and $\lim_{n\rightarrow +\infty}d_n=0$ by hypothesis. 
It follows immediately that 
for $n$ large enough, we have $2 d_n<\pi/(2 d_0)$, 
and so
%$2n^{-\frac 1{\alpha\beta}+\frac{3\eta}\beta}<\pi/d$ and
$$
\floor{\frac{2\pi}{ d_0 t}-\frac{2 d_n}{t}}
\ge \floor{\frac {3\pi}{2d_0t}}\ge 1+\floor{\frac \pi{2d_0t}} \ge \ceil{\frac\pi{2d_0t}}\ge
\ceil{\frac{ 2d_n}{t}}\, .
$$
This concludes the proof of the lemma. 
\end{proof}

\section{Proof of the local limit theorem in the strongly nonlattice case}
As in \cite{BFFN}, the proof in the strongly nonlattice case is closely
related to the proof in the lattice case.

We assume here that $\xi$ is strongly nonlattice. In that case, 
there exist $\varepsilon_0 >0$,
$\sigma > 0$ and $\rho <1$ such that 
$|\varphi_{\xi}(u)| \leq \rho$ if $|u|\ge \varepsilon_0$ and
$|\varphi_{\xi}(u)| \leq \exp(-\sigma |u|^{\beta})$ if $|u|<\varepsilon_0$.

We use here the notations of Section \ref{sec:TLL}
with the hypotheses on $\gamma$, and $\delta$
of Proposition \ref{lem:equivalent}.
Let $h_0$ be the density of Polya's distribution:
$ h_0(y) = \frac1 \pi \frac{1-\cos(y)}{y^2}
$, with Fourier transform $\hat{h}_0(t) =(1 - |t|)_{+}$. For $\theta \in \mathbb{R}$,
let $h_{\theta}(y) = \exp(i \theta y) h_0(y)$ with Fourier transform
$\hat{h}_{\theta}(t)=\hat{h}_0(t+\theta)$. As in \cite[thm 5.4]{durrett},
it is enough to show
that for all $\theta \in \mathbb{R}$, 
\begin{equation}
\label{nonlattice.eq}
\lim_{n \rightarrow \infty} b_n \EE \cro{h_{\theta}(Z_n - b_n x)}
= C(x)\, \hat{h}_{\theta}(0) \, .
\end{equation} 
By Fourier inverse transform, we have
\[ 
b_n \EE \cro{h_{\theta}(Z_n - b_n x)}
= \frac{ b_n}{2 \pi} 
 \int_{\mathbb R} e^{-iub_n x}
\EE \cro{\prod_{x \in \ZZ^d} \varphi_{\xi}(u N_n(x))} \hat{h}_{\theta}(u)\; du
\, .
\]
Since $\hat{h}_{\theta}\in L^1$, we can restrict our study to the 
event $\Omega_n$ of Lemma \ref{lem:omega_n}. 
The part of the integral corresponding to $|u| \leq n^{ \delta}b_n^{-1}$
is treated exactly as in Proposition \ref{lem:equivalent}. The only change is that
we have to check that
\[
\lim_{n \rightarrow \infty} b_n  \int_{\{|u| \le n^{\delta}b_n^{-1}\} }
\EE \cro{ e^{-|u|^{\beta} V_n (A_1 + i A_2 sgn(u))}{\bf 1}_{\Omega_n}} 
(\hat{h}_{\theta}(u) - \hat{h}_{\theta}(0)) \, du    = 0
\, ,
\]
which is obviously true since  $V_n\ge n^{1-\gamma(1-\beta)_+}$
and since $2\gamma(1-\beta)_+<2\delta\beta<1$, 
using the fact that $\hat{h}_{\theta}$  is a Lipschitz function. 

Now, since $\hat{h}_{\theta}$ is bounded, the part corresponding to 
$n^{\delta}b_n^{-1} \leq |u| 
\leq \varepsilon_0 n^{-\gamma}$ is treated as in the proof of Proposition 
\ref{sec:step1} (since it only uses the behavior of 
$\varphi_{\xi}$ around $0$, which is
the same).

Finally, it remains to prove that 
\begin{equation}
\label{nonlattice.pf} 
\lim_{n \rightarrow  \infty} b_n
\int_{\{|u| \geq \varepsilon_0 n^{-\gamma} \}}
e^{-iu b_n x} \, \EE \cro{ \prod_x \varphi_{\xi}(u N_n(x)) 
{\bf 1}_{\Omega_n}}
\hat{h}_{\theta}(u) \, du = 0 \, .
\end{equation} 
We note that, if $|u| \geq \varepsilon_0 n^{-\gamma}$ and $x\in{\mathbb Z}^d$, we have
\begin{eqnarray*} 
\va{\varphi_{\xi}(u N_n(x))} 
& \leq & \exp(- \sigma |u|^{\beta} N_n^{\beta}(x)) \,\, 
{\bf 1}_{\{\va{u N_n(x)} \leq \varepsilon_0\}} 
+ \rho  \,\, {\bf 1}_{\{\va{u N_n(x)} \geq \varepsilon_0\}}  
\\
& \leq & \exp(- \sigma \varepsilon_0^\beta n^{-\gamma \beta} N_n^{\beta}(x))
\,\, {\bf 1}_{\{\va{u N_n(x)} \leq \varepsilon_0\}} 
+ \rho \,\, {\bf 1}_{\{\va{u N_n(x)} \geq \varepsilon_0\}}  
\, .
\end{eqnarray*} 
For $n$ large enough, $\rho \leq  
\exp(- \sigma \varepsilon_0^\beta n^{-\gamma \beta} )$. 
Therefore, if $n$ is large enough, then for 
all $x$ and $u$ such that $N_n(x) \geq 1$ and $|u| \geq \varepsilon_0 n^{-\gamma}$, 
we have
\[
\va{\varphi_{\xi}(u N_n(x))} 
\leq \exp(- \sigma \varepsilon_0^\beta n^{-\gamma \beta} ) \, .
\]
Hence,
\[
\va{\EE\cro{\prod_x \varphi_{\xi}(u N_n(x)) \, {\bf 1}_{\Omega_n}}}
\leq \EE\cro{\exp(- \sigma \varepsilon_0^\beta n^{-\gamma \beta} 
    R_n )  {\bf 1}_{\Omega_n}}
\leq \exp(- \sigma \varepsilon_0^\beta n^{1-\gamma(1+\beta)}) \, .
\] 
Therefore, since $\gamma(1+\beta) < 1$, we have
\[
\lim_{ n \rightarrow \infty} 
b_n \int_{\{|u| \geq  \varepsilon_0 n^{-\gamma} \}} 
e^{-iu b_n x} \, \EE\cro{\prod_x \varphi_{\xi}(uN_n(x)) \, 
{\bf 1}_{\Omega_n}} \hat{h}_{\theta}(u) \, du = 0 \, .
\]
This concludes the proof of Theorem \ref{thmTLL2}. 
\hfill $\square$

\noindent
{\bf Acknowledgments:}\\*
The authors are deeply grateful to Bruno Schapira for helpful and 
stimulating discussions.

\end{document}